\newtheorem{Teo}{Theorem}[section]
\newtheorem{Prop}[Teo]{Proposition}
\newtheorem{Lema}[Teo]{Lemma}
\newtheorem{Cor}[Teo]{Corollary}
\theoremstyle{definition}
\newtheorem{Def}[Teo]{Definition}
\newtheorem{Obs}[Teo]{Remark}
\newcommand{\N}{\mathbb{N}}
\newcommand{\Llr}{\Longleftrightarrow}
\newcommand{\lra}{\longrightarrow}
\newcommand{\VR}{\mathcal{O}}
\newcommand{\PI}{\mathfrak{p}}
\newcommand{\MI}{\mathfrak{m}}
\newcommand{\QI}{\mathfrak{q}}
\newcommand{\hei}{\mbox{\rm ht}}
\newcommand{\SU}{\mbox{\rm supp}}
\newcommand{\ass}{\mbox{\rm Ass}}
\newcommand{\rad}{\mbox{\rm Nil}}
\newcommand{\ann}{\mbox{\rm ann}}
\newcommand{\rk}{\mbox{\rm rk}}
\newcommand{\Sp}{\mbox{\rm Spec}}
\begin{document}
\title[Local uniformization for non-domains]{Reduction of local uniformization to the case of rank one valuations for rings with zero divisors}

\author{Josnei Novacoski}
\address{Josnei Novacoski\newline\indent CAPES Foundation  \newline \indent Ministry of Education of Brazil \newline \indent Bras\'ilia/DF 70040-020 \newline \indent Brazil}
\email{jan328@mail.usask.ca}
\author{Mark Spivakovsky}

\address{Mark Spivakovsky \newline\indent Institut de Math\'ematiques de Toulouse and CNRS\newline\indent Universit\'e Paul Sabatier \newline\indent  118 route de Narbonne \newline\indent  F-31062 Toulouse cedex 9 \newline\indent  France}
\email{mark.spivakovsky@math.univ-toulouse.fr}

\thanks{During the realization of this project the first author was supported by a grant from the program ``Ci\^encia sem Fronteiras" from the Brazilian government.}

\keywords{Local uniformization, resolution of singularities, reduced varieties}
\subjclass[2010]{Primary 14B05; Secondary 14E15, 13H05}
\begin{abstract}
This is a continuation of a previous paper by the same authors. In the former paper, it was proved that in order to obtain local uniformization for valuations centered on local domains, it is enough to prove it for rank one valuations. In this paper, we extend this result to the case of valuations centered on rings which are not necessarily integral domains and may even contain nilpotents.
\end{abstract}

\maketitle
\section{Introduction}
For an algebraic variety $X$ over a field $k$, the problem of resolution of singularities is whether there exists a proper birational morphism $X'\lra X$ such that $X'$ is regular. The problem of local uniformization can be seen as the local version of resolution of singularities for an algebraic variety.  For a valuation $\nu$ of $k(X)$ having a center on $X$, the local uniformization problem asks whether there exists a proper birational morphism $X'\lra X$ such that the center of $\nu$ on $X'$ is regular. This problem was introduced by Zariski in the 1940's as an important step to prove resolution of singularities. Zariski's approach consists in proving first that every valuation having a center on the given algebraic variety admits local uniformization. Then one has to glue these local solutions to obtain a global resolution of all singularities.

Zariski succeeded in proving local uniformization for valuations centered on algebraic varieties over a field of characteristic zero (see \cite{Zar_2}). He used this to prove resolution of singularities for algebraic surfaces and threefolds  over a field of characteristic zero (see \cite{Zar_4}). Abhyankar proved (see \cite{Ab_1}) that local uniformization can be obtained for valuations centered on algebraic surfaces in any characteristic and used this fact to prove resolution of singularities for surfaces (see \cite{Ab_2} and \cite{Ab_3}). He also proved local uniformization and resolution of singularities for threefolds over fields of characteristic other than 2, 3 and 5 (see \cite{Ab_4}). Very recently, Cossart and Piltant proved resolution of singularities (and, in particular, local uniformization) for threefolds over any field of positive characteristic, as well as in the arithmetic case (see \cite{Cos_2} and \cite{Cos_3}). They proved it using the approach of Zariski. However, the problem of local uniformization remains open for valuations centered on algebraic varieties of dimension greater than three over fields of positive characteristic.

Since local uniformization is a local problem, we can work with local rings instead of algebraic varieties. A valuation $\nu$ centered on a local integral domain $R$ is said to admit local unifomization if there exists a local local ring $R^{(1)}$ dominated by $\VR_\nu$ and dominating $R$ such that $R^{(1)}$ is regular. Let $\mathcal{N}$ be the category of all noetherian local domains and $\mathcal{M}\subseteq\mathcal{N}$ be a subcategory of $\mathcal{N}$ which is closed under taking homomorphic images and localizing any finitely generated birational extension at a prime ideal. We want to know for which subcategories $\mathcal M$ with these properties, all valuations centered on objects of $\mathcal M$ admit local uniformization. In Section 7.8 of \cite{EGAIV}, Grothendieck proved that any category of schemes, closed under passing to subschemes and finite radical extensions, in which resolution of singularities holds, is a subcategory of quasi-excellent schemes (it is known that the category of quasi-excellent schemes is closed under all the operations mentioned above). He conjectured (see Remark 7.9.6 of \cite{EGAIV}) that resolution of singularities holds in this most general possible context: that of quasi-excellent schemes. Translated into our local situation, this conjecture says that the subcategory of
$\mathcal{N}$ which optimizes local uniformization is the category of all quasi-excellent local rings. This subcategory has the properties above. For a discussion on quasi-excellent and excellent local rings see Section 7.8 of \cite{EGAIV}. However, this conjecture is widely open.

In most of the successful cases, including those mentioned above, local uniformization was first proved for rank one valuations. Then the general case was reduce to this a priori weaker one. In \cite{Nov1}, we prove that this reduction works under very general assumptions. Namely, we consider a subcategory $\mathcal M$ of the category of all noetherian local integral domains, closed under taking homomorphic images and localizing any finitely generated birational extension at a prime ideal. The main result of \cite{Nov1} is that if every rank one valuation centered on an object of $\mathcal M$ admits local uniformization, then all the valuations centered on objects of $\mathcal M$ admit local uniformization. The main goal of this paper is to extend this result to rings which are not necessarily integral domains and, in particular, may contain nilpotent elements. The importance of non-integral and non-reduced schemes in modern algebraic geometry is well known. Even if one were only interested in reduced schemes to start with, one is led to consider non-reduced ones as they are produced by natural constructions, for example, in deformation theory. Therefore, it appears desirable to study the problem of local uniformization for such schemes and, in particular, to extend our earlier results on reducing the problem to the rank one case to this more general context.    

If $R$ is not reduced we cannot expect, in general, to make $R^{(1)}$ be regular by blowings up. The natural extension to this case is to require $\left(R^{(1)}\right)_{\rm red}$ to be regular and $I_{(1)}^n/I_{(1)}^{n+1}$ to be an $\left(R^{(1)}\right)_{\rm red}$-free module for every $n\in\N$ (here $I_{(1)}$ denotes the nilradical of $R^{(1)}$). For more precise definitions see Section
\ref{Preliminaires}. Let $\mathcal{N}$ be the category of all noetherian local rings and $\mathcal{M}\subseteq\mathcal{N}$ be a subcategory of $\mathcal{N}$ which is closed under taking homomorphic images and localizing any finitely generated birational extension at a prime ideal. Our main result is the following:

\begin{Teo}\label{mainthm}
Assume that for every noetherian local ring $R$ in $Ob(\mathcal{M})$, every rank one valuation centered on $R$ admits local uniformization. Then all the valuations centered on objects of $\mathcal{M}$ admit local uniformization.
\end{Teo}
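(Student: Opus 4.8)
The plan is to argue by induction on the rank $r$ of $\nu$, following the reduction scheme of \cite{Nov1} but carrying along at every stage the information about the nilradical; the genuinely new difficulties all concern the behaviour of $\rad(-)$ under blowing up and under passage to a residue ring. Set $I=\rad(R)$ and let $\PI=\ker(R\lra\VR_\nu)$: this is a prime of $R$ with $I\subseteq\PI$, and $\nu$ may be regarded as a valuation of $\QF(R/\PI)$ centered on the noetherian local \emph{domain} $R/\PI$. If $r=1$ there is nothing to do, this being exactly the hypothesis (which is stated for arbitrary $R\in Ob(\mathcal M)$, not only for domains). So assume $r\ge 2$ and that every valuation of rank $<r$ centered on an object of $\mathcal M$ admits local uniformization. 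Let $\Delta$ be the largest proper convex subgroup of the value group of $\nu$ and $\PI_1\subset\VR_\nu$ the corresponding prime, so that $\nu_1$, defined by $\VR_{\nu_1}=(\VR_\nu)_{\PI_1}$, has rank one, the valuation $\bar\nu$ induced by $\nu$ on the residue field of $\nu_1$ has rank $r-1$, and $\nu$ is the composite of $\nu_1$ and $\bar\nu$. Let $\QI=\PI_1\cap R$ be the center of $\nu_1$ on $R$, so $\PI\subseteq\QI\subseteq\MI_R$.

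The first step is to apply the hypothesis to the rank one valuation $\nu_1$, which is centered on $R\in Ob(\mathcal M)$: one obtains a local ring $R_1$, a localization of a finitely generated birational extension of $R$ (hence an object of $\mathcal M$), dominating $R$ and dominated by $\VR_\nu$, in which the center $\QI_1$ of $\nu_1$ is a regular point in the generalized sense (reduced ring regular, associated graded of the nilradical free; see Section~\ref{Preliminaires}). By performing further blowings up — that is, by uniformizing $\nu$ along the ideal $\QI_1$ in the embedded sense — I would arrange in addition that $\QI_1(R_1)_{\QI_1}$ is generated by part of a regular system of parameters of the reduced ring of $(R_1)_{\QI_1}$, and that $\rad(R_1)$ is compatible with $\QI_1$ near $\QI_1$, in the sense that $\rad(R_1/\QI_1)$ is the image of $\rad(R_1)$ and the relevant associated graded modules stay free. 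This normalization is what makes the gluing below go through.

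Next, the residual valuation $\bar\nu$ has rank $r-1$ and is centered on $S:=R_1/\QI_1\in Ob(\mathcal M)$, with center $\MI_{R_1}/\QI_1$. By the induction hypothesis $\bar\nu$ admits local uniformization on $S$: there is $S_1$, a localization of a finitely generated birational extension of $S$, dominating $S$ and dominated by $\VR_{\bar\nu}$, in which the center of $\bar\nu$ is a generalized regular point. Since $S_1$ is obtained from $S=R_1/\QI_1$ by a finite sequence of blowings up of ideals of $S$ followed by a localization, and since ideals of $R_1/\QI_1$ correspond to ideals of $R_1$ containing $\QI_1$ while blowing up is compatible with the closed immersion $\Sp(R_1/\QI_1)\hookrightarrow\Sp(R_1)$, one performs the same sequence of operations on $R_1$, obtaining $R_2\in Ob(\mathcal M)$, dominating $R$ and dominated by $\VR_\nu$, with $R_2/\QI_2\cong S_1$, where $\QI_2$ is the center of $\nu_1$ on $R_2$, and with the normalization of the previous paragraph preserved. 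It then remains to check that the center $\MI_{R_2}$ of $\nu$ on $R_2$ is a generalized regular point: the reduced ring of $(R_2)_{\MI_{R_2}}$ maps onto the (regular) reduced ring of $(S_1)_{\MI}$ with kernel generated by the images of part of a regular system of parameters coming from $\QI_2$, whence it is regular by a standard argument; and the successive quotients $\rad\big((R_2)_{\MI_{R_2}}\big)^n/\rad\big((R_2)_{\MI_{R_2}}\big)^{n+1}$ stay free over it, using the freeness already known for $S_1$ together with the compatibility along $\QI_2$. This completes the induction.

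The main obstacle is the gluing step and, within it, the propagation of \emph{both} conditions defining a generalized regular point. In the domain setting of \cite{Nov1} there is no nilradical, and the only thing to verify is that the center ideal $\QI$ of the rank one valuation becomes, after enough blowings up, generated by a regular sequence; here one must simultaneously control $\rad(-)$, which does not commute with blowing up (blowing up does not commute with reduction) nor, in any obvious way, with localization, and which cannot be generated by a regular sequence at all since nilpotents are zero divisors. One therefore has to replace the complete-intersection arguments of the domain case by a careful analysis of how $\rad(R_2)$ sits relative to $\QI_2$ and to $\rad(S_1)$, and to build the required compatibility into the uniformization of $\nu_1$ in the first step. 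A secondary subtlety is that a finitely generated birational extension of a non-reduced ring need not be injective, so one must check at each stage that domination by $\VR_\nu$ — and hence that genuine progress toward a resolution is being made — is preserved.
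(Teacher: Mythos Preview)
Your overall strategy---induction on the rank, decomposing $\nu=\nu_1\circ\bar\nu$, uniformizing each piece, lifting the blow-ups back to $R$, and then gluing---is exactly the paper's scheme. Two genuine gaps remain, and you essentially acknowledge the second one yourself.

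First, you omit the preliminary reduction that makes the rest work: before doing anything else, the paper blows up so that $\rad(R)$ is the \emph{only} associated prime of $R$ (Proposition~\ref{asspriide}). This is not cosmetic. Once $I=\rad(R)$ is the unique associated prime, $I$ is prime and $J(b)=(0)$ for every $b\notin I$ (Remark~\ref{rmkonasspide}); the linear-independence arguments that force $I^n/I^{n+1}$ to be free (e.g.\ the proof of Proposition~\ref{makmodfree} and Lemma~\ref{Lemathatsaysaboyutlineinde}) use primality of $I$ explicitly, as does Lemma~\ref{propelimunfloc}. Without this step, the ``compatibility along $\QI_2$'' you invoke does not give freeness: from $a_ib^lc\in I$ with $b,c\notin\PI$ you cannot conclude $a_i\in I$ unless $I$ is prime. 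Your sketch nowhere restricts the associated primes of $R$, so as written the gluing cannot be completed.

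Second, the gluing itself is only described, not carried out. The paper separates it into two concrete mechanisms: Proposition~\ref{propthatmakesreg} produces a $\nu_1$-compatible blow-up after which $R_{\rm red}$ is regular \emph{and} this regularity survives any further blow-up along an ideal $(b,a_1,\ldots,a_r)$ with $b\notin\PI$ and $a_i\in I$; then Proposition~\ref{makmodfree} performs exactly such blow-ups to make every $I^n/I^{n+1}$ free. The order matters and the stability clause in Proposition~\ref{propthatmakesreg} is what lets the second step run without destroying the first. Your phrases ``I would arrange in addition that\ldots'', ``by a standard argument'', and the final paragraph listing obstacles (non-commutation of $\rad$ with blowing up, failure of regular-sequence arguments for nilpotents) identify the right difficulties but do not resolve them; those resolutions are precisely Lemmas~\ref{propelimunfloc}--\ref{proprelfreeandreg} and \ref{Lemabgenofimodinq}--\ref{Lemathatsaysaboyutlineinde}, all of which lean on the single-associated-prime reduction you skipped. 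A minor point: $\nu_1$ is centered on $R_\QI$, not on $R$, so you should invoke the hypothesis there and then lift via Lemma~\ref{lifblupfrloc}, as the paper does.
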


The proof of Theorem \ref{mainthm} consists of three main steps. The first step is to prove that for every local ring $R$ and every valuation $\nu$ centered on $R$, there exists a local blowing up (see Definition \ref{defblup}) $R\lra R^{(1)}$ such that $R^{(1)}$ has only one associated prime ideal. Then we consider a decomposition $\nu=\nu_1\circ\nu_2$ of $\nu$ such that $\rk(\nu_1)<\rk(\nu)$ and $\rk(\nu_2)<\rk(\nu)$. Using induction, we can assume that both $\nu_1$ and $\nu_2$ admit local uniformization. The second main step consists in using this to prove that there exists a local blowing up $R^{(1)}\lra R^{(2)}$ such that $\left(R^{(2)}\right)_{\rm red}$ is regular. The third and final step is to prove that there exists a further local blowing up $R^{(2)}\lra R^{(3)}$ such that $\left(R^{(3)}\right)_{\rm red}$ is regular and $I_{(3)}^n/I_{(3)}^{n+1}$ is an $\left(R^{(3)}\right)_{\rm red}$-free module for every $n\in\N$ (here $I_{(3)}$ denotes the nilradical of $R^{(3)}$).

This paper is divided as follows. In Section \ref{Preliminaires} we present the basic definitions and results that will be used in the sequel. Sections \ref{assocprimeideals}, \ref{sectiomakerpireg} and \ref{sectiomakeinin1free} are dedicated to prove the results related to the first, second and third steps, respectively. In the last section we present a proof of our main theorem.

\section{Preliminaries}\label{Preliminaires}

Let $R$ be a noetherian commutative ring with unity and $\Gamma$ an ordered abelian group. Set $\Gamma_\infty:=\Gamma\cup\{\infty\}$ and extend the addition and order from $\Gamma$ to $\Gamma_\infty$ as usual.
\begin{Def}
A valuation $\nu$ on $R$ is a mapping $\nu:R\lra \Gamma_\infty$ with the following properties:
\begin{itemize}
\item[(V1)] $\nu(ab)=\nu(a)+\nu(b)$ for every $a,b\in R$;
\item[(V2)] $\nu(a+b)\geq\min \{\nu(a),\nu(b)\}$ for every $a,b\in R$;
\item[(V3)] $\nu(1)=0$ and $\nu(0)=\infty$;
\item[(V4)] The \textbf{support of $\nu$}, which is defined by $\SU(\nu):=\{a\in R\mid \nu(a)=\infty\}$, is a minimal prime ideal of $R$.
\end{itemize}
\end{Def}

Take a multiplicative system $S$ of $R$ such that $\SU(\nu)\subseteq R\setminus S$. Then the extension (which we call again $\nu$) of $\nu$ to $R_S$ given by $\nu(a/s):=\nu(a)-\nu(s)$ is again a valuation. Indeed, the three first axioms are easily checked. The minimality of $\SU(\nu)$ as a prime ideal of $R_S$ follows from the fact that the prime ideals of $R_S$ are in a bijective correspondence to the prime ideals of $R$ contained in $R\setminus S$. From now on, we will freely make such extensions of $\nu$ to
$R_S$ without mentioning it explicitly.

A valuation $\nu$ on $R$ is said to have a center if $\nu(a)\geq 0$ for every $a\in R$. In this case, the \textbf{center} of $\nu$ on
$R$ is defined by $\mathfrak C_{\nu}(R):=\{a\in R\mid\nu(a)>0\}$. Moreover, if $R$ is a local ring with unique maximal ideal $\MI$ (in which case we say ``the local ring $(R,\MI)$"), then a valuation $\nu$ on $R$ is said to be \textbf{centered} at $R$ if $\nu(a)\geq 0$ for every $a\in R$ and $\nu(a)>0$ for every $a\in \MI$. We observe that if $\nu$ is a valuation having a center on $R$, then $\nu$ is centered on $R_{\mathfrak C_\nu(R)}$. The \textbf{value group of $\nu$}, denoted by $\nu R$,  is defined as the subgroup of $\Gamma$ generated by $\{\nu(a)\mid a\in R\}$. The \textbf{rank of $\nu$} is the number of proper convex subgroups of $\nu R$.

For an element $b\in R\setminus \SU(\nu)$ we consider the canonical map $\Phi:R\lra R_b$ given by $\Phi(a)=a/1$. Let
\[
J(b):=\ker \Phi=\bigcup_{i=1}^\infty \ann_R(b^i).
\]
We have a natural embedding $R/J(b)\subseteq R_{b}$. Take $a_1,\ldots,a_r\in R$ such that
$$
\nu(a_i)\geq \nu(b)\quad\text{ for each }i,1\leq i\leq r.
$$
Consider the subring $R':=R/J(b)[a_1/b,\ldots,a_r/b]$ of $R_b$. Then the restriction of $\nu$ to $R'$ has a center $\mathfrak C_{\nu}(R')$ in $R'$. We set $R^{(1)}:=R'_{\mathfrak C_{\nu}(R')}$.

\begin{Def}\label{defblup}
The canonical map $R\lra R^{(1)}$ will be called the \textbf{local blowing up of $R$} with respect to $\nu$ along the ideal $(b,a_1,\cdots, a_r)$. For a valuation $\mu$ having a center on $R$ we will say that $R\lra R^{(1)}$ is \textbf{$\mu$-compatible} if $b\notin\mathfrak C_\mu(R)$ and $a_i\in \mathfrak C_\mu(R)$ for every $i$, $1\leq i\leq r$.
\end{Def}

\begin{Lema}\label{compobblupisblup}
The composition of finitely many local blowings up is again a local blowing up. Moreover, if each of these local blowings up is $\mu$-compatible, then their composition is again $\mu$-compatible.
\end{Lema}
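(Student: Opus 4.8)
The plan is to reduce immediately, by induction on the number of blowings up, to the case of two local blowings up: it suffices to show that if $R\lra R^{(1)}$ is a local blowing up along $(b,a_1,\dots,a_r)$ and $R^{(1)}\lra R^{(2)}$ is a local blowing up along $(b',a'_1,\dots,a'_s)$ (with all the $a'_j$ and $b'$ lying in $R^{(1)}$), then $R\lra R^{(2)}$ is a local blowing up along a single ideal of $R$. So I must produce an element $c\in R\setminus\SU(\nu)$ and elements $c_1,\dots,c_t\in R$ with $\nu(c_k)\ge\nu(c)$ such that, after passing to $R/J(c)[c_1/c,\dots,c_t/c]$ and localizing at the center of $\nu$, one recovers exactly $R^{(2)}$.

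The key computation is to clear denominators. Each generator of $R^{(1)}$ over $R/J(b)$ is a polynomial in the $a_i/b$; hence $b'$ and each $a'_j$, viewed inside the localization $R_b$, can be written as $\beta/b^N$ and $\alpha_j/b^N$ for a common exponent $N$ and elements $\beta,\alpha_j$ in the image of $R$ (here one uses that $R^{(1)}\subseteq R_b$, so these fractions make sense, and one must check that $\nu(\beta)=N\nu(b)+\nu(b')$, $\nu(\alpha_j)\ge N\nu(b)+\nu(b')$, which follows from the hypotheses $\nu(a'_j)\ge\nu(b')$ and $b'\notin\SU(\nu)$). Now set $c:=b^{N+1}\beta$ — an element of $R$ not in $\SU(\nu)$ since $b,\beta\notin\SU(\nu)$ and $\SU(\nu)$ is prime — and take for the $c_k$ the collection consisting of $b^N\beta\cdot a_i$ for all $i$ (these have value $\ge\nu(c)$ because $\nu(a_i)\ge\nu(b)$) together with $b^{N+1}\alpha_j$ for all $j$ (value $\ge N\nu(b)+\nu(b)+\nu(b')+(\nu(\alpha_j)-N\nu(b)-\nu(b'))\ge\nu(c)$) and, to be safe, $b^{N+1}\beta$ itself. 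Inside $R_c\subseteq R_{b\beta}$ one then has $c_k/c$ equal to $a_i/b$, to $\alpha_j/\beta=a'_j/b'\cdot(b'/b')\cdots$, i.e. to $a'_j/\beta$ which equals $b^{-N}a'_j$ — so by combining these one generates exactly $a_i/b$ and $a'_j/b'$; conversely each generator $c_k/c$ lies in $R^{(2)}$. Hence $R/J(c)[c_1/c,\dots,c_t/c]$ has the same total ring of fractions-image as the ring generated over $R^{(1)}$ by the $a'_j/b'$, and localizing both at the center of $\nu$ gives the same local ring $R^{(2)}$. One small point to verify is that $J(c)$ behaves correctly: since $c=b^{N+1}\beta$, one checks $J(c)\supseteq J(b)$ and that passing to $R/J(c)$ and inverting $c$ is the same as first passing to $R/J(b)$, inverting $b$, and then (inside $R^{(1)}$) inverting $b'$; this is a routine localization-of-localization argument using that $\ann$ of powers stabilizes.

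The $\mu$-compatibility assertion is then essentially bookkeeping on top of this: for two $\mu$-compatible blowings up we have $b\notin\mathfrak C_\mu(R)$, $a_i\in\mathfrak C_\mu(R)$, $b'\notin\mathfrak C_\mu(R^{(1)})$, $a'_j\in\mathfrak C_\mu(R^{(1)})$, and we must check that the single ideal $(c,c_1,\dots,c_t)$ constructed above satisfies $c\notin\mathfrak C_\mu(R)$ and $c_k\in\mathfrak C_\mu(R)$. For $c=b^{N+1}\beta$: $\mu(b)=0$, and $\mu(\beta)=0$ because $\beta/b^N=b'$ has $\mu$-value $0$ in $R^{(1)}$ (using $b'\notin\mathfrak C_\mu(R^{(1)})$ and $\mu(b)=0$); hence $\mu(c)=0$, i.e. $c\notin\mathfrak C_\mu(R)$. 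For the $c_k$ of the form $b^N\beta a_i$ we get $\mu>0$ from $\mu(a_i)>0$; for those of the form $b^{N+1}\alpha_j$ we get $\mu>0$ from $\mu(\alpha_j)=\mu(b^N)+\mu(a'_j)=\mu(a'_j)>0$. So the composite is $\mu$-compatible. I expect the main obstacle to be the first one flagged above — getting the localizations and the annihilator ideals $J(\cdot)$ to match up precisely, so that one genuinely recovers $R^{(2)}$ and not merely a ring with the same fraction field — rather than the value-theoretic estimates, which are straightforward from (V1)–(V2) and the definition of $\mu$-compatibility.
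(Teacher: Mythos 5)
Your overall strategy is the paper's: reduce by induction to two blowings up, clear denominators so as to exhibit the composite as a single local blowing up of $R$, and then check the value inequalities and the $\mu$-compatibility. However, the step on which your whole construction rests is stated incorrectly. You claim $R^{(1)}\subseteq R_b$, so that $b'$ and the $a'_j$ can be written as $\beta/b^N$ and $\alpha_j/b^N$ with $\beta,\alpha_j$ images of elements of $R$. This is false in general: $R^{(1)}=R'_{\mathfrak C_\nu(R')}$ where only $R'=R/J(b)[a_1/b,\dots,a_r/b]$ lies in $R_b$; passing to $R^{(1)}$ inverts every element of $R'\setminus\mathfrak C_\nu(R')$, and most of these are not invertible in $R_b$ (already for $R=k[x,y]_{(x,y)}$, a valuation with $0<\nu(x)<\nu(y)$ and the blowing up along $(x,y)$, the element $(1+y/x)^{-1}=x/(x+y)$ of $R^{(1)}$ does not lie in $R_x$). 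Since the second blowing up is along an arbitrary ideal $(b',a'_1,\dots,a'_s)$ of $R^{(1)}$, its generators are fractions $p/q$ with $q\in R'\setminus\mathfrak C_\nu(R')$, and the representation you use to define $c$ and the $c_k$ simply need not exist.

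The gap is repairable, and the repair is essentially what the paper does: only the ratios $a'_j/b'$ matter for $R^{(2)}$, and each such ratio is a quotient of elements of $R'$, hence, after clearing powers of $b$ and taking a common denominator, of the form $\pi(a_{r+j})/\pi(b'')$ with $a_{r+j},b''\in R$; equivalently, you may first multiply $(b',a'_1,\dots,a'_s)$ by a suitable unit $u$ of $R^{(1)}$ lying in $R'$ so that the new generators have power-of-$b$ denominators, which changes neither $J(b')$ nor the subring generated by the $a'_j/b'$, and then your computation goes through (your ideal becomes, up to the harmless factor $b^N$, the paper's $(bb'',a_1b'',\dots,a_rb'',a_{r+1}b,\dots,a_{r+s}b)$). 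Two smaller points: drop $c=b^{N+1}\beta$ from the list of the $c_k$ — it is redundant and, since $\mu(c)=0$, its presence would contradict the very definition of $\mu$-compatibility you then verify; and you leave the final identification of the two localized rings (the matching of the $J(\cdot)$'s and of the successive localizations) as routine, which is the same level of detail as the paper ("it is straightforward"), but after the fix above that is where the remaining work actually lies.
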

\begin{proof}
It is enough to prove that for two local blowings up $\pi:R\lra R^{(1)}$ and $\pi':R^{(1)}\lra R^{(2)}$ with respect to $\nu$, there exists a local blowing up $R\lra R^{(3)}$ with respect to $\nu$ such that $R^{(3)}\simeq R^{(2)}$. We write
\[
R^{(1)}=R'_{\mathfrak C_\nu(R')}\mbox{ for }R'=R/J(b)[a_1/b,\ldots,a_r/b]
\]
for some $a_1,\ldots,a_r,b\in R$ and
\[
R^{(2)}=R'^{(1)}_{\mathfrak C_\nu(R'^{(1)})}\mbox{ for }R'^{(1)}=R^{(1)}/J(\beta)[\alpha_1/\beta,\ldots,\alpha_s/\beta]
\]
for some $\alpha_1,\ldots,\alpha_s,\beta\in R^{(1)}$. Then there exist $a_{r+1},\ldots,a_{r+s},b'\in R$ such that $\alpha_i/\beta=\pi(a_{r+i})/\pi(b')$ for each $i$, $1\leq i\leq s$. Consider the local blowing up
$$
R\lra R^{(3)}
$$
given by
\[
R^{(3)}=R''_{\mathfrak C_\nu(R'')}\mbox{ for }R''=R/J(bb')[a_1b'/bb',\ldots,a_rb'/bb',a_{r+1}b/bb'\ldots,a_{r+s}b/bb'].
\]
It is straightforward to prove that $R^{(2)}\simeq R^{(3)}$.
\end{proof}
In view of Lemma \ref{compobblupisblup}, we will freely use the fact that the composition of finitely many  local blowings up is itself a local blowing up without mentioning it explicitly.

For simplicity of notation, we denote the \textbf{nilradical} of $R$ by $I$, i.e.,
\[
I=\rad(R):=\{a\in R\mid a^l=0\mbox{ for some }l\in \N\}.
\]

\begin{Def}\label{defnormflat}
We say that $\Sp(R)$ is \textbf{normally flat along $\Sp(R_{\rm red})$} if $I^n/I^{n+1}$ is an $R_{\rm red}$-free module for every $n\in\N$.
\end{Def}

Since $R$ is noetherian, there exists $N\in\N$ such that $I^n=(0)$ for every $n>N$. Hence, the condition in Definition \ref{defnormflat} is equivalent to the freeness of the finitely many modules
$I/I^2,\ldots, I^N/I^{N+1}=I^N$.

\begin{Def}
For a local ring $R$, a valuation $\nu$ centered on $R$ is said to admit \textbf{local uniformization} if there exists a local blowing up $R\lra R^{(1)}$ with respect to $\nu$ such that $\left(R^{(1)}\right)_{\rm red}$ is regular and $\Sp\left(R^{(1)}\right)$ is normally flat along $\Sp\left(\left(R^{(1)}\right)_{\rm red}\right)$. 
\end{Def}

Let $\nu=\nu_1\circ\nu_2$ be a fixed decomposition of $\nu$. For simplicity of notation, we set $\PI:=\mathfrak C_{\nu_1}(R)$ and for a local blowing up $R\lra R^{(1)}$ we set $\PI^{(1)}:=\mathfrak C_{\nu_1}\left(R^{(1)}\right)$. We need to guarantee that the main structure of $R_\PI$ and $R/\PI$ are preserved under $\nu_1$-compatible local blowings up. More precisely, we have to prove the following:

\begin{Prop}\label{propabkeepreg}
Let $\pi:R\lra R^{(1)}$ be a $\nu_1$-compatible local blowing up. Then the canonical maps $R_\PI\lra R^{(1)}_{\PI^{(1)}}$ and $R/\PI\lra R^{(1)}/\PI^{(1)}$ induced by $\pi$ are isomorphisms.
\end{Prop}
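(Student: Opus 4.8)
The plan is to reduce both statements to a direct analysis of the intermediate ring $R'=R/J(b)[a_1/b,\dots,a_r/b]$ occurring in the construction of $R^{(1)}$, using that $\nu_1$-compatibility puts a $\nu_1$-unit ($b$) in the denominator and elements of the $\nu_1$-center ($a_i$) in the numerators, so that the blowing up is, informally, ``invisible to $\nu_1$''.

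First I would record the elementary facts about the composite $\nu=\nu_1\circ\nu_2$: since $\nu$ is centered on $R$, the coarsening $\nu_1$ has a center on $R$; $\SU(\nu_1)=\SU(\nu)$; and $\nu(x)=0$ exactly when $\nu_1(x)=0$ and $\nu_2$ of the image of $x$ in the residue field of $\VR_{\nu_1}$ is $0$. Combined with $\nu_1$-compatibility this yields the facts I will use: (i) $\nu_1(b)=0$ (as $b\notin\PI$ and $\nu_1(b)\geq 0$), so $b$ is a unit in $R_\PI$; (ii) $J(b)=\bigcup_i\ann_R(b^i)\subseteq\SU(\nu_1)\subseteq\PI$, since $ab^i=0$ forces $\nu_1(a)=\nu_1(a)+i\nu_1(b)=\infty$, and consequently $J(b)R_\PI=0$; (iii) $a_i\in\PI$ gives $\nu_1(a_i/b)=\nu_1(a_i)>0$, so the image of $a_i/b$ lies in $\PI':=\mathfrak C_{\nu_1}(R')$; (iv) $\PI'\subseteq\mathfrak C_\nu(R')$, and since $\nu_1$ on $R/J(b)$ restricts that of $R$, one has $\PI'\cap(R/J(b))=\PI/J(b)$. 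Finally, because every element inverted in passing from $R'$ to $R^{(1)}=R'_{\mathfrak C_\nu(R')}$ has $\nu_1$-value $0$, one gets $\PI^{(1)}=\PI'R^{(1)}$, whence, by transitivity of localization, $R^{(1)}_{\PI^{(1)}}=R'_{\PI'}$ and $R^{(1)}/\PI^{(1)}=(R'/\PI')_{\mathfrak C_\nu(R')/\PI'}$.

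For the isomorphism $R_\PI\cong R^{(1)}_{\PI^{(1)}}$: inside $R_b$ one has $R'[b^{-1}]=R_b$, because each $a_i/b$ is the product of the image of $a_i$ and $b^{-1}$, so adjoining $b^{-1}$ to $R/J(b)$ already produces $R_b$. Since $b\notin\PI'$ by (i), the localization $R'\lra R'_{\PI'}$ factors through $R'[b^{-1}]=R_b$, exhibiting $R'_{\PI'}=(R_b)_{\PI'R_b}$; contracting to $R/J(b)$ via (iv) gives $R'_{\PI'}=(R/J(b))_{\PI/J(b)}=R_\PI/J(b)R_\PI$, and this equals $R_\PI$ by (ii). Tracing the maps shows this is precisely the isomorphism induced by $\pi$. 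For the isomorphism $R/\PI\cong R^{(1)}/\PI^{(1)}$: by (iii) the images of the $a_i/b$ vanish in $R'/\PI'$, so $R/J(b)\lra R'/\PI'$ is surjective with kernel $\PI'\cap(R/J(b))=\PI/J(b)$ by (iv), giving $R'/\PI'\cong R/\PI$ compatibly with $\pi$. Under this identification the prime $\mathfrak C_\nu(R')/\PI'$ corresponds to $\mathfrak C_\nu(R)/\PI$, the maximal ideal of the local ring $R/\PI$, so the remaining localization changes nothing and $R^{(1)}/\PI^{(1)}\cong R/\PI$, again via the map induced by $\pi$.

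The conceptual content is simply that a $\nu_1$-compatible local blowing up affects neither $R_\PI$ nor $R/\PI$; the only place requiring genuine care is the second step—verifying $\PI^{(1)}=\PI'R^{(1)}$, pinning down which prime of $R'/\PI'$ must be inverted, and checking $J(b)R_\PI=0$—but all of this is routine once the facts about the composite valuation are in place.
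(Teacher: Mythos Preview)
Your proof is correct and follows essentially the same route as the paper: for the localization statement you pass through $R_b$ and use that $b$ is a $\nu_1$-unit, exactly as the paper does via its Lemma on double localizations, and for the quotient statement you exploit $a_i/b\in\mathfrak C_{\nu_1}(R')$ to see that $R'/\PI'$ is already $R/\PI$, which is the structural version of the paper's explicit computation showing $p/q\equiv p_0/q_0\pmod{\PI^{(1)}}$. One small point of care: the justification of $J(b)R_\PI=0$ should invoke that $b$ is a unit in $R_\PI$ (so $ab^i=0$ forces $a=0$ there), not merely that $J(b)\subseteq\SU(\nu_1)$, since elements of the support need not vanish in $R_\PI$.
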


In order to prove Proposition \ref{propabkeepreg} we need the following basic lemma.

\begin{Lema}\label{lemaabdouloc}
Let $S$ be a multiplicative system of $R$ contained in $R\setminus \mathfrak C_\nu(R)$. Then the canonical map
$\Phi:R_{\mathfrak C_\nu(R)}\lra \left(R_S\right)_{\mathfrak C_\nu(R_S)}$ given by $\Phi(a/b)=(a/1)/(b/1)$ is an isomorphism.
\end{Lema}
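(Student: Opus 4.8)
The plan is first to describe the ideal $\mathfrak C_\nu(R_S)$ explicitly, and then to deduce the statement from the transitivity of localization.

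To begin with, since $\nu$ has a center on $R$, each $s\in S$ satisfies $\nu(s)\geq 0$, and $s\notin\mathfrak C_\nu(R)$ forces $\nu(s)=0$; also $s\notin\SU(\nu)$ because $\SU(\nu)\subseteq\mathfrak C_\nu(R)$, so $\nu(s)$ is finite. Hence the extension of $\nu$ to $R_S$ satisfies $\nu(a/s)=\nu(a)-\nu(s)=\nu(a)$ for all $a\in R$ and $s\in S$, and therefore
\[
\mathfrak C_\nu(R_S)=\{a/s\in R_S\mid\nu(a)>0\}=\{a/s\mid a\in\mathfrak C_\nu(R),\ s\in S\}=\mathfrak C_\nu(R)\,R_S,
\]
the last equality holding because $\mathfrak C_\nu(R)$ is an ideal of $R$. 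In particular an element $b\in R$ lies outside $\mathfrak C_\nu(R)$ if and only if $b/1$ lies outside $\mathfrak C_\nu(R_S)$, and so the composite map $R\to R_S\to(R_S)_{\mathfrak C_\nu(R_S)}$ carries every element of $R\setminus\mathfrak C_\nu(R)$ to a unit.

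By the universal property of $R_{\mathfrak C_\nu(R)}$ (the localization of $R$ at $R\setminus\mathfrak C_\nu(R)$) there is then a unique homomorphism of $R$-algebras $R_{\mathfrak C_\nu(R)}\to(R_S)_{\mathfrak C_\nu(R_S)}$, and a direct check on fractions shows that it is precisely the map $\Phi$ of the statement. To build an inverse, note that $R\to R_{\mathfrak C_\nu(R)}$ sends $S\subseteq R\setminus\mathfrak C_\nu(R)$ to units, hence factors through a homomorphism $\lambda\colon R_S\to R_{\mathfrak C_\nu(R)}$; and $\lambda$ sends $R_S\setminus\mathfrak C_\nu(R_S)$ to units, since by the previous paragraph such an element is of the form $a/s$ with $a\notin\mathfrak C_\nu(R)$, and then $\lambda(a/s)=a/s$ is a unit of $R_{\mathfrak C_\nu(R)}$ (both $a$ and $s$ lying outside $\mathfrak C_\nu(R)$). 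Thus $\lambda$ factors through $(R_S)_{\mathfrak C_\nu(R_S)}$, producing an $R$-algebra homomorphism $\Psi\colon(R_S)_{\mathfrak C_\nu(R_S)}\to R_{\mathfrak C_\nu(R)}$. Both $\Psi\circ\Phi$ and $\Phi\circ\Psi$ are $R$-algebra endomorphisms of $R_{\mathfrak C_\nu(R)}$ and of $(R_S)_{\mathfrak C_\nu(R_S)}$ respectively, so by the uniqueness clauses in the corresponding universal properties they must be the identity; hence $\Phi$ is an isomorphism.

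I do not expect a genuine obstacle here: the lemma is the standard fact that, for a prime ideal $\mathfrak C_\nu(R)$ and a multiplicative set $S$ disjoint from it, localizing first at $S$ and then at $\mathfrak C_\nu(R)R_S$ agrees with localizing directly at $\mathfrak C_\nu(R)$. The only points requiring a little care are the identification $\mathfrak C_\nu(R_S)=\mathfrak C_\nu(R)R_S$ — where one must remember that $R\to R_S$ need not be injective and clear denominators with an auxiliary element of $S$ — and keeping track of which multiplicative set is being inverted at each stage. A reader preferring to avoid universal properties can argue on fractions directly: $\Phi$ is surjective because any $(a/s)/(a'/s')$ in $(R_S)_{\mathfrak C_\nu(R_S)}$ (so that $a'\notin\mathfrak C_\nu(R)$) equals $\Phi(as'/(a's))$ after multiplying numerator and denominator by the units $s'/1$ and $s/1$ (and $a's\notin\mathfrak C_\nu(R)$), while $\Phi$ is injective because $\Phi(a/b)=0$ yields $s'ca=0$ in $R$ for some $s'\in S$ and some $c\notin\mathfrak C_\nu(R)$, whence $s'c\notin\mathfrak C_\nu(R)$ and already $a/b=0$ in $R_{\mathfrak C_\nu(R)}$.
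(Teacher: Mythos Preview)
Your argument is correct. The paper's own proof is precisely the direct fraction-chasing you sketch in your final paragraph: it shows surjectivity by rewriting $(a/b)/(c/d)$ as $\Phi(ad/bc)$ after noting $\nu(b)=\nu(c)=\nu(d)=0$, and injectivity by unwinding $\Phi(a/b)=0$ to find $s\in S$ and $c\notin\mathfrak C_\nu(R)$ with $sac=0$, so that $sc\notin\mathfrak C_\nu(R)$ annihilates $a$.

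Your primary route is more conceptual: you identify $\mathfrak C_\nu(R_S)=\mathfrak C_\nu(R)R_S$ and then invoke transitivity of localization via universal properties to produce a two-sided inverse $\Psi$. This buys you a cleaner argument that avoids any explicit manipulation of double fractions and makes transparent why the result is really just the standard fact that localizing at $S$ and then at the extended prime is the same as localizing at the prime directly. The paper's approach, by contrast, is entirely self-contained and requires no appeal to universal properties, which fits its hands-on style elsewhere. Either is perfectly adequate here; since you already include the direct argument as an alternative, there is nothing to add.
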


\begin{proof}
For an element $(a/b)/(c/d)\in \left(R_S\right)_{\mathfrak C_\nu(R_S)}$ we have
$$
\nu(b)=\nu(c)=\nu(d)=0.
$$
Consequently, $\nu(bc)=0$ and $ad/bc\in R_{\mathfrak C_\nu(R)}$. Then
\[
(a/b)/(c/d)=(ad/1)/(bc/1)=\Phi(ad/bc).
\]

Suppose that $\Phi(a/b)=0$. This means that there exists $c/d\in R_S\setminus \mathfrak C_\nu(R_S)$ such that $ac/d=0$ in $R_S$. Thus, there exists $s\in S$ such that $sac=0$. Moreover, since $c/d\notin\mathfrak C_\nu(R_S)$ we also have that $c\notin\mathfrak C_\nu(R)$. This and the fact that $s\in S\subseteq R\setminus\mathfrak C_\nu(R)$ imply that $sc\notin \mathfrak C_\nu(R)$. Hence, $a/b=0$ in $R_{\mathfrak C_\nu(R)}$ which is what we wanted to prove.
\end{proof}

\begin{proof}[Proof of Proposition \ref{propabkeepreg}]
Applying Lemma \ref{lemaabdouloc} to $R$ (with $S=\{1,b,b^2,\ldots\})$ and $R'$ (with $S'=R'\setminus \mathfrak C_\nu(R')$) and the valuation $\nu_1$, we obtain that the canonical maps $R_\PI\lra\left(R_b\right)_{\mathfrak C_{\nu_1}(R_b)}$ and $R'_{\mathfrak
C_{\nu_1}(R')}\lra R^{(1)}_{\PI^{(1)}}$, respectively, are isomorphisms. Hence, in order to prove the first assertion, it is enough to show that the canonical map $\left(R_b\right)_{\mathfrak C_{\nu_1}(R_b)}\longleftarrow R'_{\mathfrak C_{\nu_1}(R')}$ is an isomorphism.

Since $R'\subseteq R_b$ and $\mathfrak C_{\nu_1}(R')=R'\cap \mathfrak C_{\nu_1}(R_b)$ we have that $R'_{\mathfrak C_{\nu_1}(R')}\lra \left(R_b\right)_{\mathfrak C_{\nu_1}(R_b)}$ is injective. On the other hand, any element $(a/b^n)/(c/b^m)$ in $\left(R_b\right)_{\mathfrak C_{\nu_1}(R_b)}$ can be written as $(a b^m/1)/(c b^n/1)$ which is the image of $a b^m/c b^n$. Hence the map
$$
R'_{\mathfrak C_{\nu_1}(R')}\lra \left(R_b\right)_{\mathfrak C_{\nu_1}(R_b)}
$$
is surjective and consequently it is an isomorphism.

Set $R_0=R/J(b)$ and consider the induced map $R_0\lra R^{(1)}$. Since the canonical map $R\lra R_0$ is surjective, in order to prove the surjectivity of $R\lra R^{(1)}/\PI^{(1)}$, it is enough to show that $R_0\lra R^{(1)}/\PI^{(1)}$ is surjective. For an element $\alpha\in R^{(1)}$ we write $\alpha=p/q$ where $p=P(a_1/b,\ldots,a_r/b)$ and $q=Q(a_1/b,\ldots,a_r/b)$ for some
\[
P(X_1,\ldots,X_r),Q(X_1,\ldots,X_r)\in R_0[X_1,\ldots,X_r].
\]
Set $p_0=P(0,\ldots,0)$ and $q_0=Q(0,\ldots,0)$. Then
\[
p_1:=p-p_0=\sum_{i=1}^r a_i/b\cdot P_i(a_1/b,\cdots,a_r/b)
\]
and
\[
q_1:=q-q_0=\sum_{i=1}^r a_i/b\cdot Q_i(a_1/b,\cdots,a_r/b)
\]
for some $P_i,Q_i\in R_0[X_1,\ldots,X_r]$, $1\leq i\leq r$. Since $\nu_1(a_i/b)> 0$ we obtain that $\nu_1(p_1)>0$ and $\nu_1(q_1)>0$. This implies that
\begin{eqnarray}
\nu_1(q_0)&=&0,\\
\nu_1(q_0q)&=&0
\end{eqnarray}
and
\begin{equation}
\nu_1(q_0p_1-p_0q_1)>0.
\end{equation}
Therefore,
\[
p/q-p_0/q_0=(q_0p_1-p_0q_1)/q_0q\in \PI^{(1)}.
\]
It remains to prove that $p_0/q_0\in R_0$. Since $\nu_1(q_1)>0$, also $\nu(q_1)>0$. Hence, $\nu(q_0)=\nu(q-q_1)=0$ and consequently
$q_0$ is a unit in $R_0$. Therefore, $p_0/q_0\in R_0$.

To finish our proof it is enough to show that the kernel of $R\lra R^{(1)}/\PI^{(1)}$ is $\PI$. This follows immediately from the definition of $\PI$ and $\PI^{(1)}$ as the centers of $\nu_1$ on $R$ and $R^{(1)}$, respectively.
\end{proof}

Lemmas \ref{lifblupfrloc} and \ref{lifblupfrquo} below are generalizations of Lemma 2.18 and Corollary 2.20 of \cite{Nov1}, respectively. The proofs presented there can be adapted to our more general case. We present sketches of the proofs for the convenience of the reader.

\begin{Lema}\label{lifblupfrloc}
For each local blowing up $R_\PI\lra\widetilde R^{(1)}$ with respect to $\nu_1$, there exists a local blowing up $R\lra R^{(1)}$ with respect to $\nu$ such that $\widetilde R^{(1)}\simeq R^{(1)}_{\PI^{(1)}}$.
\end{Lema}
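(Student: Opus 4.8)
The plan is to take an arbitrary local blowing up $R_\PI \lra \widetilde R^{(1)}$ with respect to $\nu_1$, write it explicitly in terms of generators, clear denominators to lift those generators back to $R$, and then perform the corresponding local blowing up of $R$ with respect to $\nu$. Concretely, suppose $\widetilde R^{(1)} = \widetilde R'_{\mathfrak C_{\nu_1}(\widetilde R')}$ where $\widetilde R' = (R_\PI)/J(\beta)[\alpha_1/\beta, \ldots, \alpha_r/\beta]$ for some $\beta, \alpha_1, \ldots, \alpha_r \in R_\PI$ with $\nu_1(\alpha_i) \geq \nu_1(\beta)$. Since $R_\PI$ is a localization of $R$, each $\alpha_i$ and $\beta$ can be written with a common denominator $s \in R \setminus \PI$: say $\alpha_i = a_i/s$ and $\beta = b/s$ with $a_i, b \in R$. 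Then $\alpha_i/\beta = a_i/b$ in $R_\PI$, so we may as well assume $\alpha_i, \beta \in R$; note that since $\nu_1(a_i) = \nu_1(\alpha_i) + \nu_1(s) = \nu_1(\alpha_i) \geq \nu_1(\beta) = \nu_1(b)$ we keep the inequality $\nu_1(a_i) \geq \nu_1(b)$, and since $b \notin \PI = \mathfrak C_{\nu_1}(R)$, in particular $b \notin \SU(\nu)$, so $\nu(b) < \infty$.

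The first real issue is that to blow up $R$ along $(b, a_1, \ldots, a_r)$ with respect to $\nu$ we need $\nu(a_i) \geq \nu(b)$, not just $\nu_1(a_i) \geq \nu_1(b)$. This is where I would invoke the structure of the composite valuation $\nu = \nu_1 \circ \nu_2$: the value group $\nu R$ has a convex subgroup corresponding to the cut defined by $\nu_1$, and $\nu_1(a_i) \geq \nu_1(b)$ together with $b \notin \PI$ forces $\nu(a_i) > 0$; more carefully, if $\nu_1(a_i) > \nu_1(b)$ then $\nu(a_i) > \nu(b)$ automatically, while if $\nu_1(a_i) = \nu_1(b)$ we may not have $\nu(a_i) \geq \nu(b)$. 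To fix this, I would replace each offending $a_i$ by $a_i b^{k}$ for suitable $k$, or — following the argument in \cite{Nov1} — instead blow up along the ideal generated by $b$ together with the $a_i$ for which $\nu_1(a_i) > \nu_1(b)$ and the products $a_i b'$ handling the equal-value case, adjusting denominators so that all the new numerators have $\nu$-value at least that of the new "center" element; the point is that multiplying numerator and denominator by a common power of $b$ does not change the ring $\widetilde R'$ inside $R_\PI$ but does raise $\nu$-values into the acceptable range. This bookkeeping is the main obstacle and is essentially the content of the corresponding lemma of \cite{Nov1}.

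Once the generators are arranged so that $R \lra R^{(1)} := R'_{\mathfrak C_\nu(R')}$ is a genuine local blowing up of $R$ with respect to $\nu$, where $R' = R/J(b)[a_1/b, \ldots, a_r/b]$, it remains to identify $\widetilde R^{(1)}$ with $R^{(1)}_{\PI^{(1)}}$. I would argue as follows. Localizing $R'$ at the multiplicative set $R' \setminus \PI^{(1)}$ (where $\PI^{(1)} = \mathfrak C_{\nu_1}(R^{(1)}) = \mathfrak C_{\nu_1}(R')$ pulled back appropriately) and then at $\mathfrak C_\nu$ commutes in the sense of Lemma \ref{lemaabdouloc}: that lemma, applied with $S = R' \setminus \mathfrak C_{\nu_1}(R')$, shows $R^{(1)}_{\PI^{(1)}} = (R'_{\mathfrak C_\nu(R')})_{\PI^{(1)}} \simeq (R'_{\mathfrak C_{\nu_1}(R')})_{\mathfrak C_\nu(\cdot)}$. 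On the other hand, $R'_{\mathfrak C_{\nu_1}(R')}$ is obtained from $R_\PI$ by adjoining the $a_i/b$ and localizing: indeed $R' = R/J(b)[a_1/b,\ldots,a_r/b]$, and after inverting the elements outside $\PI^{(1)}$, using Proposition \ref{propabkeepreg} to see that $R_\PI \lra R^{(1)}_{\PI^{(1)}}$ behaves well, one matches this with the ring $\widetilde R'$ defining $\widetilde R^{(1)}$. The final localization at $\mathfrak C_\nu$ recovers the center of $\nu$, which at the level of $R^{(1)}_{\PI^{(1)}}$ corresponds exactly to the center of $\nu_1$, so we obtain $\widetilde R^{(1)} = \widetilde R'_{\mathfrak C_{\nu_1}(\widetilde R')} \simeq R^{(1)}_{\PI^{(1)}}$, as desired. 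The compatibility of all these localizations is where I would lean most heavily on Lemma \ref{lemaabdouloc} and Proposition \ref{propabkeepreg}; the remaining verifications (that $J(b)$ in $R$ maps to $J(\beta)$-type kernels, that the polynomial expressions match up) are routine and I would only sketch them, as the statement itself promises.
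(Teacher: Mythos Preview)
Your overall plan---lift generators to $R$, fix the $\nu$-inequalities, then identify localizations---is the right shape, but the crucial middle step does not work as written. You propose to repair the failure of $\nu(a_i)\geq\nu(b)$ by ``multiplying numerator and denominator by a common power of $b$''. This does not change anything: replacing $a_i/b$ by $a_ib^k/b^{k+1}$ leaves the required inequality $\nu(a_ib^k)\geq\nu(b^{k+1})$ equivalent to $\nu(a_i)\geq\nu(b)$, so no value of $k$ helps. The variant you sketch (``products $a_i b'$ handling the equal-value case'') introduces an undefined $b'$ and never specifies a mechanism that actually forces the $\nu$-inequality. This is the one place in the proof where a real idea is needed, and it is missing.

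The paper's fix is different and much simpler: if some $\nu(a_i)<\nu(b)$, then necessarily $\nu_1(\alpha_i)=\nu_1(\beta)$, so $\alpha_i/\beta$ is a unit in $\widetilde R^{(1)}$ and one may rewrite the \emph{same} local blowing up $\widetilde R^{(1)}$ using $\alpha_i$ as the denominator instead of $\beta$. Choosing $i$ with $\nu(a_i)$ minimal among $\{\nu(a_1),\ldots,\nu(a_r),\nu(b)\}$ and permuting so that this element plays the role of $b$, one obtains lifts satisfying $\nu(a_j)\geq\nu(b)$ for all $j$ without altering $\widetilde R^{(1)}$. That permutation trick is the content of the lemma; once it is in place, the verification $R^{(1)}_{\PI^{(1)}}\simeq\widetilde R^{(1)}$ really is routine.

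A smaller point: do not invoke Proposition~\ref{propabkeepreg} for the final identification. That proposition applies only to $\nu_1$-\emph{compatible} blowings up and asserts that $R_\PI\simeq R^{(1)}_{\PI^{(1)}}$, which is precisely what you do \emph{not} want here (you want $R^{(1)}_{\PI^{(1)}}$ to be the nontrivial blowup $\widetilde R^{(1)}$, not $R_\PI$). Use Lemma~\ref{lemaabdouloc} alone to juggle the localizations.
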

\begin{proof}
We consider the local blowing up $R_\PI\lra \widetilde{R}^{(1)}$ given by
\[
\widetilde R^{(1)}=\widetilde R'_{\mathfrak C_{\nu_1}(\widetilde R')}\mbox{ for }\widetilde R'=R_\PI/J(\beta)[\alpha_1/\beta,\ldots,\alpha_r/\beta].
\]
Choose $a_1,\ldots,a_r,b\in R$ such that for each $i$, $1\leq i\leq r$ we have $\Phi(a_i)/\Phi(b)=\alpha_i/\beta$ where $\Phi:R\lra
R_\PI$ is the canonical map. If $\nu(a_i)<\nu(b)$ for some $i$, $1\leq i\leq r$, then we have $\nu_1(\alpha_i)=\nu_1(\beta)$. Choose $i$ so as to minimize the value $\nu(a_i)$, in other words, so that $\nu(a_i)\le\nu(a_j)$ for all $j\in\{1,\dots,r\}$. Set
\[
\widetilde{R}'':=R_\PI/J(\alpha_i)\left[\frac{\alpha_1}{\alpha_i},\ldots,\frac{\alpha_{i-1}}{\alpha_i},\frac{\beta}{\alpha_i},
\frac{\alpha_{i+1}}{\alpha_i},\ldots,\frac{\alpha_r}{\alpha_i}\right].
\]
Then $R^{(1)}\simeq \widetilde{R}''_{\mathfrak C_{\nu_1}(\widetilde{R}'')}$. Hence, after a suitable permutation of the set
$\{a_1,\dots,a_r,b\}$, we may assume that $\nu(a_i)\geq \nu(b)$ for every $i$, $1\leq i\leq r$. Consider the local blowing up
\[
R^{(1)}=R'_{\mathfrak C_{\nu}(R')}\mbox{ for }R'=R/J(b)[a_1/b,\ldots,a_r/b]
\]
with respect to $\nu$. It is straightforward to prove that $R^{(1)}_{\PI^{(1)}}\simeq\widetilde R^{(1)}$.
\end{proof}

\begin{Lema}\label{lifblupfrquo}
For each local blowing up $R/\PI\lra \overline R^{(1)}$ with respect to $\nu_2$, there exists a local blowing up $R\lra R^{(1)}$ with respect to $\nu$ such that $R^{(1)}/\PI^{(1)}\simeq \overline R^{(1)}$ and $R_\PI\simeq R^{(1)}_{\PI^{(1)}}$.
\end{Lema}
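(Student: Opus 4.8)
The plan is to run the argument of Lemma~\ref{lifblupfrloc} in the opposite direction: this time we are handed a blowing up on the quotient $R/\PI$ and must lift it to $R$. Write the given local blowing up as $\overline R^{(1)}=\overline R'_{\mathfrak C_{\nu_2}(\overline R')}$, where $\overline R'=(R/\PI)/J(\overline b)\,[\overline{a_1}/\overline b,\ldots,\overline{a_r}/\overline b]$ for suitable $\overline{a_1},\ldots,\overline{a_r},\overline b\in R/\PI$ with $\nu_2(\overline{a_i})\ge\nu_2(\overline b)$ for all $i$. Since $R/\PI$ is a domain and $\SU(\nu_2)=(0)$, we have $\overline b\ne0$ and $J(\overline b)=(0)$, so in fact $\overline R'=(R/\PI)[\overline{a_1}/\overline b,\ldots,\overline{a_r}/\overline b]$. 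Choose arbitrary lifts $a_1,\ldots,a_r,b\in R$ of $\overline{a_1},\ldots,\overline{a_r},\overline b$; since $\overline b\ne0$ we automatically have $b\notin\PI$, and since $\SU(\nu)\subseteq\PI$ also $b\notin\SU(\nu)$. First I would check that $\nu(a_i)\ge\nu(b)$ for every $i$. If $\overline{a_i}=0$, then $a_i\in\PI$, so $\nu_1(a_i)>0=\nu_1(b)$, and the usual properties of composite valuations give $\nu(a_i)>\nu(b)$. If $\overline{a_i}\ne0$, then $a_i\notin\PI$ and, because $\nu=\nu_1\circ\nu_2$, one has $\nu(a_i)\ge\nu(b)$ if and only if $\nu_2(\overline{a_i})\ge\nu_2(\overline b)$, which holds by hypothesis. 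Hence $R^{(1)}:=R'_{\mathfrak C_\nu(R')}$ with $R'=R/J(b)\,[a_1/b,\ldots,a_r/b]$ is a well-defined local blowing up of $R$ with respect to $\nu$.

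The heart of the proof is the identification of $R^{(1)}/\PI^{(1)}$ with $\overline R^{(1)}$. The ring homomorphism $R_b\lra(R/\PI)_{\overline b}$ induced by $R\to R/\PI$ and $b\mapsto\overline b$ restricts to a surjection $\psi\colon R'\to\overline R'$ with $\psi(a_i/b)=\overline{a_i}/\overline b$ (note $J(b)\subseteq\PI$, so the projection $R/J(b)\to R/\PI$ is defined). Since $R/\PI$ is a domain and $\overline b\ne0$, the kernel of $R_b\to(R/\PI)_{\overline b}$ is $\PI R_b$, which coincides with $\mathfrak C_{\nu_1}(R_b)$ because $\nu_1(b)=0$; intersecting with $R'$ yields $\ker\psi=R'\cap\mathfrak C_{\nu_1}(R_b)=\mathfrak C_{\nu_1}(R')$. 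Thus $\psi$ induces an isomorphism $R'/\mathfrak C_{\nu_1}(R')\xrightarrow{\sim}\overline R'$. On the other hand, since $\mathfrak C_{\nu_1}(R')\subseteq\mathfrak C_\nu(R')$ one checks that $\PI^{(1)}=\mathfrak C_{\nu_1}(R')R^{(1)}$, so $R^{(1)}/\PI^{(1)}\simeq\bigl(R'/\mathfrak C_{\nu_1}(R')\bigr)_{\mathfrak C_\nu(R')/\mathfrak C_{\nu_1}(R')}$; and under the isomorphism just obtained the prime $\mathfrak C_\nu(R')/\mathfrak C_{\nu_1}(R')$ is carried onto $\mathfrak C_{\nu_2}(\overline R')$, once more because $\nu=\nu_1\circ\nu_2$ (for $x\in R'\setminus\mathfrak C_{\nu_1}(R')$ one has $\nu(x)>0$ iff $\nu_2(\psi(x))>0$). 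Localizing both sides of the isomorphism accordingly gives $R^{(1)}/\PI^{(1)}\simeq\overline R'_{\mathfrak C_{\nu_2}(\overline R')}=\overline R^{(1)}$.

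Finally, $R_\PI\simeq R^{(1)}_{\PI^{(1)}}$ is precisely the first assertion of Proposition~\ref{propabkeepreg}; its proof uses only the hypothesis $b\notin\PI$ and never uses $a_i\in\PI$, which we have arranged, so it carries over verbatim (one could also re-derive it directly from Lemma~\ref{lemaabdouloc}, exactly as in the proof of that proposition). The step I expect to require the most care is the identification in the second paragraph --- the equality $\ker\psi=\mathfrak C_{\nu_1}(R')$ together with the matching of $\mathfrak C_\nu(R')/\mathfrak C_{\nu_1}(R')$ with $\mathfrak C_{\nu_2}(\overline R')$ --- since these are exactly the points where the composite structure $\nu=\nu_1\circ\nu_2$ is used, as is the inequality $\nu(a_i)\ge\nu(b)$ needed to make the lifted blowing up legitimate. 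Everything else is routine bookkeeping with localizations.
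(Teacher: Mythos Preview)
Your proposal is correct and follows essentially the same approach as the paper: lift the data of the blowing up on $R/\PI$ to elements of $R$, check that the resulting local blowing up of $R$ with respect to $\nu$ is legitimate, and verify the two isomorphisms. The paper's own proof is extremely terse --- after choosing lifts $a_1,\ldots,a_r,b\in R\setminus\PI$ and observing $\nu(a_i)\ge\nu(b)$ it simply declares the rest ``straightforward'' --- and your argument is precisely a careful execution of that straightforward part. The only cosmetic difference is that the paper takes all lifts in $R\setminus\PI$ (tacitly discarding any $\overline{a_i}=0$), which makes the inequality $\nu(a_i)\ge\nu(b)$ immediate from $\nu_2(\overline{a_i})\ge\nu_2(\overline b)$ without your case split; your more permissive choice of lifts works equally well. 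Your appeal to the first assertion of Proposition~\ref{propabkeepreg} for $R_\PI\simeq R^{(1)}_{\PI^{(1)}}$ is also legitimate, since, as you observe, that part of the proof uses only $b\notin\PI$.
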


\begin{proof}
For an element $a\in R$ we denote its image under the canonical map
$$
R\lra R/\PI
$$
by $\overline a$. Then
\[
\overline R^{(1)}=\overline R'_{\mathfrak C_{\nu_2}(\overline R')}\mbox{ with }\overline R'=\left(R/\PI\right)/J(\overline b)[\overline a_1/\overline b,\ldots,\overline a_r/\overline b]
\]
for some $a_1,\ldots,a_r,b\in R\setminus\mathfrak p$. Since $\nu_2(\overline a_i)\geq \nu_2(\overline b)$ we have $\nu(a_i)\geq\nu(b)$ for every $i$, $1\leq i\leq r$. Then we can consider the local blowing up
\[
R^{(1)}=R'_{\mathfrak C_{\nu}(R')}\mbox{ with }R'=R/J(b)[a_1/b,\ldots,a_r/b]
\]
with respect to $\nu$. It is again straightforward to prove that $R^{(1)}/\PI^{(1)}\simeq \overline R^{(1)}$ and $R_\PI\simeq
R^{(1)}_{\PI^{(1)}}$.
\end{proof}

\section{Associated prime ideals of $R$}\label{assocprimeideals}

Let $R$ be a local ring and $\nu$ a valuation centered on $R$. The main result of this section is the following:

\begin{Prop}\label{asspriide}
There exists a local blowing up $R\lra R^{(1)}$ with respect to $\nu$ such that $\rad\left(R^{(1)}\right)$ is the only associated prime of $R^{(1)}$.
\end{Prop}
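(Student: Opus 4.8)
The plan is to realize $R^{(1)}$ as a single local blowing up along a \emph{principal} ideal $(b)$ --- that is, with no elements $a_i$, so that $R^{(1)}=(R/J(b))_{\mathfrak C_\nu(R/J(b))}$ --- where $b\in R\setminus\SU(\nu)$ is chosen so that already the ring $R/J(b)$ has $\SU(\nu)/J(b)$ as its unique associated prime. The localization at the center of $\nu$ can then only \emph{remove} associated primes, so $R^{(1)}$ will inherit the property, and its unique associated prime will necessarily be its nilradical.

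First I would record how associated primes behave under $R\mapsto R/J(b)$ for $b\notin\SU(\nu)$. If $ab^i=0$ then $\nu(a)+i\nu(b)=\infty$ with $i\nu(b)$ finite, hence $\nu(a)=\infty$; thus $J(b)\subseteq\SU(\nu)$, so $\nu$ induces a valuation on $R/J(b)$ whose support $\SU(\nu)/J(b)$ is a minimal prime of $R/J(b)$ (by (V4) and $J(b)\subseteq\SU(\nu)$), and $R^{(1)}:=(R/J(b))_{\mathfrak C_\nu(R/J(b))}$ makes sense. Moreover $b$ is a nonzerodivisor on $R/J(b)$ (if $bx\in J(b)$ then $b^{i+1}x=0$ for some $i$, so $x\in J(b)$) and, canonically, $R_b=(R/J(b))_b$. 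Using the standard fact that $\ass_R(S^{-1}M)=\{\mathfrak p\in\ass_R(M)\mid\mathfrak p\cap S=\emptyset\}$ for $R$ noetherian, with $S=\{1,b,b^2,\dots\}$: applied to $M=R$ it gives $\ass_R(R_b)=\{\mathfrak p\in\ass_R(R)\mid b\notin\mathfrak p\}$, and applied to $M=R/J(b)$ (whose associated primes all avoid $b$, a nonzerodivisor) it gives $\ass_R(R/J(b))=\ass_R(R_b)$; hence
\[
\ass_R\!\left(R/J(b)\right)=\left\{\mathfrak p\in\ass_R(R)\mid b\notin\mathfrak p\right\}.
\]

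Second, I would choose $b$. Write $\ass_R(R)=\{\mathfrak q,\mathfrak q_2,\dots,\mathfrak q_n\}$ with $\mathfrak q:=\SU(\nu)$; by (V4), $\mathfrak q$ is a minimal prime of $R$, so $\mathfrak q\in\ass_R(R)$ and $\mathfrak q\subseteq\mathfrak C_\nu(R)$. By prime avoidance, $\mathfrak q_2\cap\cdots\cap\mathfrak q_n\not\subseteq\mathfrak q$: otherwise $\mathfrak q_j\subseteq\mathfrak q$ for some $j\ge 2$, and minimality of $\mathfrak q$ would force the contradiction $\mathfrak q_j=\mathfrak q$ (if $n=1$ the intersection is $R$, and $1\notin\mathfrak q$). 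Pick $b\in(\mathfrak q_2\cap\cdots\cap\mathfrak q_n)\setminus\mathfrak q$; then $b\notin\SU(\nu)$ and, by the displayed identity, $\ass_R(R/J(b))=\{\mathfrak q/J(b)\}$. Now take the local blowing up $R\lra R^{(1)}=(R/J(b))_{\mathfrak C_\nu(R/J(b))}$ along $(b)$. Since $\mathfrak q\subseteq\mathfrak C_\nu(R)$, the prime $\mathfrak q/J(b)$ lies in $\mathfrak C_\nu(R/J(b))$ and so survives the localization; and since localizing a module only discards associated primes (those surviving being in bijection with a subset of the originals), $\ass\!\left(R^{(1)}\right)$ consists of a single prime $\mathfrak Q$, the image of $\mathfrak q$. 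As every minimal prime of a noetherian ring is associated, $\mathfrak Q$ is the unique minimal prime of $R^{(1)}$, whence $\rad\!\left(R^{(1)}\right)=\mathfrak Q$ is the only associated prime of $R^{(1)}$, as required.

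The only step that is not purely formal is the identity $\ass_R(R/J(b))=\{\mathfrak p\in\ass_R(R)\mid b\notin\mathfrak p\}$; everything else is prime avoidance and the standard compatibility of $\ass$ with localization. I therefore expect the bulk of the actual write-up to be the verification of that identity, the essential inputs being the inclusion $R/J(b)\hookrightarrow R_b$, the canonical identification $R_b=(R/J(b))_b$, and the fact that $b$ acts as a nonzerodivisor on $R/J(b)$.
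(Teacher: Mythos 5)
Your proof is correct, but it takes a genuinely different route from the paper. You kill all the extraneous associated primes in a single stroke: choosing $b$ inside every associated prime except $\SU(\nu)$ (your appeal to ``prime avoidance'' is really the fact that a prime containing a finite intersection of ideals contains one of them, but the argument you spell out is the right one), you reduce everything to the identity $\ass_R\left(R/J(b)\right)=\{\mathfrak p\in\ass_R(R)\mid b\notin\mathfrak p\}$, which you verify via $R/J(b)\hookrightarrow R_b=(R/J(b))_b$ and the fact that $b$ is a nonzerodivisor on $R/J(b)$; the blow-up is then along the principal ideal $(b)$ (if one insists on at least one numerator in Definition 2.2, take $a_1=b$, which changes nothing). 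The paper instead argues by induction on $|\ass(R)|$: it blows up along an associated prime $\QI\not\subseteq\SU(\nu)$, writing $\QI=(b,a_1,\ldots,a_r)$ with $b$ of minimal value, and uses Lemma \ref{claimasspid} (annihilators in $R'=R/J(b)[a_1/b,\ldots,a_r/b]$ come from annihilators in $R$, and primeness pulls back) together with Theorem 6.2 of \cite{Mat} to show that $|\ass|$ strictly drops, since the chosen $\QI=\ann_R(c)$ dies because $bc=0$ forces $c/1=0$ in $R'$. Your version is shorter and needs only the standard compatibility of $\ass$ with localization, avoiding the polynomial part of the blow-up entirely; the paper's heavier Lemma \ref{claimasspid} is not wasted, however, since it also yields Corollary \ref{nilrpresblup} (stability of ``the nilradical is the only associated prime'' under arbitrary further local blowings up), which is used repeatedly later in the paper and which your argument does not by itself provide.
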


In order to prove Proposition \ref{asspriide}, we need the following result.

\begin{Lema}\label{claimasspid}
Let $R'=R/J(b)[a_1/b,\ldots,a_r/b]\subseteq R_b$ for some $b,a_1,\ldots,a_r\in R$ with $\nu(b)\leq\nu(a_i)$ for every $i$, $1\leq i\leq r$. Then for every $c'\in R'$ the ideal $\ann_{R'}(c')$ can be written as $\ann_{R'}(c/1)$ for some $c\in R$. Moreover, if
$\ann_{R'}(c')$ is prime, then $\ann_R\left(b^Nc\right)$ is a prime ideal of $R$ for some $N\in\N$.
\end{Lema}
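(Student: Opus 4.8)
We are working inside $R_b$, with $R' = R/J(b)[a_1/b,\dots,a_r/b]$ and $\nu(b)\le\nu(a_i)$ for all $i$. The plan is to prove the two assertions in sequence. For the first assertion, take $c' \in R'$ and clear denominators: since $c'$ is a polynomial in the $a_i/b$ with coefficients in $R/J(b)$, there is $N \in \N$ with $b^N c' = \bar c/1$ for some $c \in R$ (here $\bar c$ denotes the class of $c$ in $R/J(b)$, and $\bar c/1$ its image in $R_b$). Because $b/1$ is a unit in $R_b$ and hence in $R'$, multiplication by $b^N$ is an automorphism of $R'$, so $\ann_{R'}(c') = \ann_{R'}(b^N c') = \ann_{R'}(\bar c/1) = \ann_{R'}(c/1)$, where in the last equality I use that the canonical map $R \to R'$ factors through $R/J(b)$ and sends $c$ to $\bar c/1$. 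This gives the first claim.

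**The prime case.** Now suppose $\mathfrak q' := \ann_{R'}(c/1)$ is a prime ideal of $R'$. The claim is that $\ann_R(b^N c)$ is prime in $R$ for some $N$. First observe that $\ann_R(b^N c) \subseteq \ann_R(b^{N+1} c)$, and since $R$ is noetherian this ascending chain stabilizes: fix $N$ with $\ann_R(b^N c) = \ann_R(b^M c)$ for all $M \ge N$, and set $\mathfrak q := \ann_R(b^N c)$. I would first check $\mathfrak q$ is proper: if $1 \in \mathfrak q$ then $b^N c = 0$ in $R$, hence $c/1 = 0$ in $R_b$, hence $c/1 = 0$ in $R'$, forcing $\mathfrak q' = R'$, contradicting primality. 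Next, the key point is to relate $\mathfrak q$ to the contraction of $\mathfrak q'$ under $\varphi: R \to R'$. I expect $\mathfrak q = \varphi^{-1}(\mathfrak q')$: the inclusion $\subseteq$ is immediate since $x \in \mathfrak q$ gives $x b^N c = 0$ in $R$, so $\varphi(x)\cdot(b^N c/1) = 0$ in $R'$, and as $b/1$ is a unit, $\varphi(x) \in \ann_{R'}(c/1) = \mathfrak q'$. For the reverse inclusion, take $x \in R$ with $\varphi(x) \in \mathfrak q'$, i.e. $\varphi(x)\cdot c/1 = 0$ in $R'$; this equation holds already in $R_b$ (since $R' \subseteq R_b$), so $xc/1 = 0$ in $R_b$, meaning $b^L xc = 0$ in $R$ for some $L$, i.e. $x \in \ann_R(b^L c) = \mathfrak q$ by the stabilization (enlarging $N$ if needed so that $N$ works simultaneously as the denominator bound and the stabilization bound). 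Once $\mathfrak q = \varphi^{-1}(\mathfrak q')$ is established, primality of $\mathfrak q$ follows from primality of $\mathfrak q'$, since the contraction of a prime ideal along a ring homomorphism is always prime (when proper).

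**Main obstacle.** I anticipate the delicate point is the bookkeeping with the exponent $N$: it must be chosen large enough to serve three roles at once — to clear denominators so that $b^N c' = \bar c/1$, to stabilize the chain $\ann_R(b^M c)$, and to absorb the extra factor $b^L$ arising when pulling equations back from $R_b$ to $R$. Since $R$ is noetherian all the relevant chains stabilize, so a single $N$ works for everything, but the argument should make clear that the $c$ produced in the first assertion and the $N$ used in the second can be taken compatibly (indeed, one takes the $c$ from the first part and then enlarges $N$). A secondary subtlety is keeping straight the three rings $R$, $R/J(b)$, $R_b$ and the fact that $R/J(b) \hookrightarrow R_b$ is injective, so that "equals zero in $R'$" and "equals zero in $R_b$" coincide for elements of $R/J(b)$ and their multiples — this is what lets the pullback argument go through cleanly.
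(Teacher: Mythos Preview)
Your argument is correct and follows essentially the same route as the paper: write $c'=c/b^l$, use noetherianity to stabilise the chain $\ann_R(b^nc)$, and identify $\ann_R(b^Nc)$ as the contraction of the prime $\ann_{R'}(c/1)$ (the paper contracts in two steps via $R_0=R/J(b)$, you do it in one via $\varphi:R\to R'$; this is cosmetic).

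One correction, however: the assertion ``$b/1$ is a unit in $R_b$ and hence in $R'$'' is false in general --- for instance, with $R=k[x,y]_{(x,y)}$, $b=x$, $a_1=y$, the element $x$ is not invertible in $R[y/x]$. What you actually need, and what does hold, is that multiplication by $b$ is \emph{injective} on $R'$, since $R'\subseteq R_b$ and $b$ is a unit in $R_b$; injectivity alone gives $\ann_{R'}(c')=\ann_{R'}(b^Nc')$. The same fix applies where you invoke ``$b/1$ is a unit'' in the $\subseteq$ direction of $\mathfrak q=\varphi^{-1}(\mathfrak q')$. With that adjustment the proof goes through.
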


\begin{proof}
Choose $c\in R$ such that $c'=c/b^l$ for some $l\in \N$. Fix $a'\in R'$ and write $a'=a/b^m$ for some $m\in\N$ and $a\in R$. Then we have
\[
a'\in\ann_{R'}(c')\Llr acb^n=0\mbox{ for some }n\in\N\Llr a'\in \ann_{R'}(c/1).
\]

Now assume that $\ann_{R'}(c')$ is prime and set $R_0:=R/J(b)$. Then
\[
\ann_{R_0}(c/1)=\ann_{R'}(c')\cap R_0
\]
is also prime. Moreover,
\begin{equation}\label{inveqann}
\pi^{-1}(\ann_{R_0}(c/1))=\bigcup_{n=1}^\infty\ann_R(b^nc),
\end{equation}
where $\pi:R\lra R/J(b)$ is the canonical epimorphism. Indeed,
\begin{displaymath}
\begin{array}{rcl}
a\in \pi^{-1}(\ann_{R_0}(c/1))&\Llr & a c/1=0\mbox{ in }R_b\\
& \Llr & b^na c=0\mbox{ in }R\mbox{ for some }n\in \N\\
& \Llr & a\in \displaystyle\bigcup_{n=1}^\infty\ann_R(b^nc).
\end{array}
\end{displaymath}
Since $R$ is noetherian and
\[
\ann_R(bc)\subseteq \ann_R\left(b^2 c\right)\subseteq\cdots\subseteq \ann_R\left(b^n c\right)\subseteq\cdots
\]
we have that
\begin{equation}\label{anneqannofone}
\ann_R\left(b^Nc\right)=\displaystyle\bigcup_{n=1}^\infty\ann_R(b^nc)\mbox{ for some }N\in \N. 
\end{equation}
By (\ref{inveqann}) and (\ref{anneqannofone}) we conclude that $\ann_R\left(b^Nc\right)$ is a prime ideal of $R$.
\end{proof}

\begin{Cor}\label{nilrpresblup}
For a local blowing up $R\lra R^{(1)}$, if $\rad(R)$ is the only associated prime ideal of $R$, then $\rad\left(R^{(1)}\right)$ is the only associated prime ideal of $R^{(1)}$.
\end{Cor}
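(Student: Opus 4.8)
The plan is to reduce the statement to the claim that $\rad(R')$ is the unique associated prime of the ring $R'=R/J(b)[a_1/b,\ldots,a_r/b]\subseteq R_b$ that appears in the local blowing up $R\lra R^{(1)}=R'_{\mathfrak C_\nu(R')}$ (where we may assume $\nu(b)\leq\nu(a_i)$ for all $i$, since $R\lra R^{(1)}$ is a local blowing up). Note first that $R'$ is noetherian, being a finitely generated algebra over the noetherian ring $R/J(b)$. Next, every nilpotent element has value $\infty>0$, so $\rad(R')\subseteq\mathfrak C_\nu(R')$; hence $\rad(R')$ does not meet the multiplicative set $S=R'\setminus\mathfrak C_\nu(R')$. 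Therefore, once we know $\ass(R')=\{\rad(R')\}$, the standard description of associated primes under localization gives $\ass(R^{(1)})=\{S^{-1}\rad(R')\}$, and since $\rad$ commutes with localization, $S^{-1}\rad(R')=\rad(R^{(1)})$, which is exactly what we want.

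To prove $\ass(R')=\{\rad(R')\}$, fix $\mathfrak q\in\ass(R')$ and write $\mathfrak q=\ann_{R'}(c')$ for some $c'\in R'$. By Lemma \ref{claimasspid} we may assume $\mathfrak q=\ann_{R'}(c/1)$ for some $c\in R$, and, $\mathfrak q$ being prime, there is $N\in\N$ such that $\mathfrak p:=\ann_R(b^Nc)$ is a prime ideal of $R$. Since $\mathfrak p$ is the annihilator of the element $b^Nc\in R$ and is prime, $\mathfrak p\in\ass(R)$; by hypothesis $\ass(R)=\{\rad(R)\}$, so $\ann_R(b^Nc)=\rad(R)$. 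Moreover, by the stabilization of the increasing chain $\ann_R(b^nc)$ recorded in the proof of Lemma \ref{claimasspid}, we in fact have $\ann_R(b^nc)=\rad(R)$ for every $n\geq N$.

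It remains to establish the two inclusions $\mathfrak q=\ann_{R'}(c/1)=\rad(R')$. For ``$\subseteq$'': given $a'\in\mathfrak q$, write $a'=a/b^m$ with $a\in R$; from $a'\cdot(c/1)=0$ in $R_b$ we get $b^kac=0$ in $R$ for some $k$, hence $a\in\ann_R(b^{\max(k,N)}c)=\rad(R)$, so $a$ is nilpotent in $R$ and thus $a'$ is nilpotent in $R'$. For ``$\supseteq$'': given $x\in\rad(R')$, write $x=y/b^j$ with $y\in R$; from $x^l=0$ in $R_b$ we get $b^ky^l=0$ in $R$ for some $k$, so $(by)^{k+l}=(b^ky^l)(b^ly^k)=0$, i.e. $by\in\rad(R)=\ann_R(b^Nc)$, whence $b^{N+1}yc=0$ in $R$; therefore $x\cdot(c/1)=yc/b^j=0$ in $R_b$ and hence in $R'$, so $x\in\mathfrak q$. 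Since $R'\neq 0$ is noetherian, $\ass(R')\neq\emptyset$, so $\ass(R')=\{\rad(R')\}$ and the reduction in the first paragraph finishes the proof. The step requiring the most care is this last pair of inclusions: the difficulty is that nilpotency of $x=y/b^j$ in $R'\subseteq R_b$ only controls $b^ky^l$ rather than $y$ itself, so one must pass to $by$, which absorbs the (locally invertible) factor $b$, in order to land inside $\rad(R)=\ann_R(b^Nc)$ and thereby invoke the hypothesis on $\ass(R)$.
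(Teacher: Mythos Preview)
Your proof is correct and follows the same overall strategy as the paper: both reduce from $R^{(1)}$ to $R'$ via the standard behaviour of associated primes under localization, and both invoke Lemma~\ref{claimasspid} to relate $\ass(R')$ back to $\ass(R)$. The only difference is in the final identification: the paper argues by counting (Lemma~\ref{claimasspid} yields $|\ass(R^{(1)})|\le|\ass(R)|=1$, then primary decomposition forces the unique associated prime to be the nilradical), whereas you pin down $\mathfrak q=\rad(R')$ directly via the two inclusions. Your explicit verification has the advantage of making transparent exactly how the hypothesis $\ann_R(b^Nc)=\rad(R)$ is used; the paper's counting step, by contrast, implicitly assumes that the correspondence of Lemma~\ref{claimasspid} is injective on associated primes, a point your argument does not need.
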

\begin{proof}
Let $R^{(1)}=R'_{\mathfrak C_\nu(R')}$ for some $R'$ as in Lemma \ref{claimasspid}. Theorem 6.2 of \cite{Mat} gives us that $\ass\left(R^{(1)}\right)=\ass(R')\cap \Sp\left(R^{(1)}\right)$. This and Lemma \ref{claimasspid} guarantee that $|\ass\left(R^{(1)}\right)|\leq |\ass\left(R\right)|=1$. Consequently, $R^{(1)}$ has only one associated prime ideal, say $\QI$. The primary decomposition theorem now gives us that $\QI=\rad\left(R^{(1)}\right)$, which is what we wanted to prove.
\end{proof}

We will use Corollary \ref{nilrpresblup} throughout this paper without always mentioning it explicitly.

\begin{proof}[Proof of Proposition \ref{asspriide}]
Since $\SU(\nu)$ is a minimal prime ideal, there exists at most one associated prime ideal of $R$ contained in (hence equal to)
$\SU(\nu)$. We will prove that if $|\ass(R)|>1$, then there exists a local blowing up $R\lra R^{(1)}$ such that $|\ass\left(R^{(1)}\right)|<|\ass(R)|$.

Take an associated prime ideal $\QI$ of $R$ such that $\QI\not\subseteq \SU(\nu)$. Write
$$
\QI=(b,a_1,\ldots,a_r)
$$
with
$\nu(b)\leq\nu(a_i)$ for every $i$, $1\leq i\leq r$. Blowing up $R$ with respect to $\nu$ along $\QI$ gives us a local ring
\[
R^{(1)}=R'_{\mathfrak C_\nu(R')}\mbox{ where }R'=R/J(b)[a_1/b,\ldots,a_r/b].
\]
Observe that this is indeed a local blowing up because $\nu(b)\leq \nu(a_i)$ for every $i$ and $\QI\not\subseteq \SU(\nu)$ implies that $b\notin \SU(\nu)$. Since
\[
\ass\left(R^{(1)}\right)=\ass(R')\cap \Sp\left(R^{(1)}\right)\mbox{ (see Theorem 6.2 of \cite{Mat}),}
\]
it remains to show that $|\ass(R')|<|\ass(R)|$.

By Lemma \ref{claimasspid}, we obtain that $R'$ has at most $|\ass(R)|$ many associated prime ideals. Moreover, for the chosen associated prime ideal $\QI=\ann_R(c)$ of $R$ and  for every $r\in\N$, the ideal $\ann_{R'}(c/b^r)$ is not prime in $R'$. Indeed, since $\QI=(b,a_1,\ldots,a_r)=\ann_R(c)$ we have $bc=0$ in $R$. This means that
$$
c/1=0/1
$$
in $R'$ and consequently $\ann_{R'}(c/1)=R'$ (which is not prime). Therefore
$$
|\ass(R')|<|\ass(R)|.
$$

\end{proof}

\begin{Obs}\label{rmkonasspide}
If $I$ is the only associated prime ideal of $R$, then for every $b\notin I$ we have $J(b)=(0)$. In this particular case, we can eliminate the ideal $J(b)$ in the definition of a local blowing up. We will use this throughout this paper without mentioning it explicitly.
\end{Obs}

\section{Making $R_{\rm red}$ regular}\label{sectiomakerpireg}
Let $R$ be a local ring and $\nu$ a valuation centered on $R$. Assume that $\nu=\nu_1\circ\nu_2$ and denote by $\PI$ the center of $\nu_1$ on $R$. As usual, we denote by $I$ the nilradical of $R$ and for a local blowing up $R\lra R^{(1)}$ we denote the nilradical of $R^{(1)}$ by $I_{(1)}$. Assume that $I$ is the only associated prime ideal of $R$. The main goal of this section is to prove the following proposition.

\begin{Prop}\label{propthatmakesreg}
Assume that $(R_\PI)_{\rm red}$ and $R/\PI$ are regular. Then there exists a $\nu_1$-compatible local blowing up $R\lra R^{(1)}$ such that $\left(R^{(1)}\right)_{\rm red}$ is regular. Moreover, for every local blowing up $R^{(1)}\lra R^{(2)}$ along an ideal $(b,a_1,\ldots,a_r)$ with $b\notin\PI^{(1)}$ and $a_1,\ldots,a_r\in I_{(1)}$ we have that $\left(R^{(2)}\right)_{\rm red}$ is regular.
\end{Prop}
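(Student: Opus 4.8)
The plan is to reduce everything to the case of a domain, namely $R_{\rm red}$, and then quote the corresponding statement from \cite{Nov1}. Since, by the standing hypothesis of this section, $I=\rad(R)$ is the unique associated prime of $R$, it is the unique minimal prime of $R$; as $\SU(\nu)$ is a minimal prime, this forces $\SU(\nu)=I$. Hence $R_{\rm red}=R/I$ is a noetherian local \emph{domain}, the valuation $\nu$ descends to a valuation $\bar\nu$ on $R_{\rm red}$, centered there, which decomposes as $\bar\nu=\bar\nu_1\circ\bar\nu_2$ with $\mathfrak C_{\bar\nu_1}(R_{\rm red})=\PI/I$. Using $(R/I)_{\PI/I}=(R_\PI)_{\rm red}$ and $(R/I)/(\PI/I)=R/\PI$, the hypotheses of the proposition say precisely that $(R_{\rm red},\bar\nu,\PI/I)$ satisfies the hypotheses of the domain version of the statement.

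The key technical point I would isolate first is a dictionary between local blowings up of $R$ with respect to $\nu$ and local blowings up of $R_{\rm red}$ with respect to $\bar\nu$. Suppose $R\to R^{(1)}$ is a local blowing up along $(b,a_1,\dots,a_r)$ with $b\notin I$. By Remark \ref{rmkonasspide}, $J(b)=(0)$, so $R'=R[a_1/b,\dots,a_r/b]\subseteq R_b$ and $R^{(1)}=R'_{\mathfrak C_\nu(R')}$. The nilradical of $R_b$ is $IR_b$, and $\rad(R')=R'\cap IR_b$; the composite $R'\hookrightarrow R_b\to R_b/IR_b=(R_{\rm red})_{\bar b}$ therefore has kernel $\rad(R')$ and image $(R_{\rm red})[\bar a_1/\bar b,\dots,\bar a_r/\bar b]$, so $R'/\rad(R')\cong(R_{\rm red})[\bar a_1/\bar b,\dots,\bar a_r/\bar b]$. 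Moreover $\nu(x)=\bar\nu(\bar x)$ for $x\in R'$, because the support of the extension of $\nu$ to $R_b$ is $IR_b$; in particular the image of $\mathfrak C_\nu(R')$ in $R'/\rad(R')$ is $\mathfrak C_{\bar\nu}\big((R_{\rm red})[\bar a_1/\bar b,\dots,\bar a_r/\bar b]\big)$. Since passing to the reduced ring commutes with localization, it follows that $(R^{(1)})_{\rm red}$ is canonically the local blowing up of $R_{\rm red}$ with respect to $\bar\nu$ along $(\bar b,\bar a_1,\dots,\bar a_r)$. Conversely, any local blowing up of $R_{\rm red}$ lifts: choose arbitrary preimages $b,a_i\in R$ of the data; then $b\notin I$, the inequalities $\nu(a_i)=\bar\nu(\bar a_i)\ge\bar\nu(\bar b)=\nu(b)$ hold so the blowing up is defined, and its reduced ring is the prescribed one. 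If the blowing up of $R_{\rm red}$ is $\bar\nu_1$-compatible, i.e.\ $\bar b\notin\PI/I$ and $\bar a_i\in\PI/I$, then the lift is $\nu_1$-compatible.

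Granting this dictionary, the first assertion is immediate: apply the domain version of the statement, which is the substance imported from \cite{Nov1}, to $R_{\rm red}$, $\bar\nu$ and $\PI/I$; it produces a $\bar\nu_1$-compatible local blowing up $R_{\rm red}\to S$ with $S$ regular, which lifts to a $\nu_1$-compatible local blowing up $R\to R^{(1)}$ with $(R^{(1)})_{\rm red}=S$ regular. For the second assertion, let $R^{(1)}\to R^{(2)}$ be a local blowing up along $(b,a_1,\dots,a_r)$ with $b\notin\PI^{(1)}$ and $a_i\in I_{(1)}$. By Corollary \ref{nilrpresblup}, $I_{(1)}$ is the unique minimal prime of $R^{(1)}$, so $I_{(1)}\subseteq\PI^{(1)}$ and hence $b\notin I_{(1)}$; the dictionary therefore applies with $R^{(1)}$ in place of $R$ and identifies $(R^{(2)})_{\rm red}$ with the local blowing up of $(R^{(1)})_{\rm red}$ along $(\bar b,\bar a_1,\dots,\bar a_r)$ with respect to the valuation induced by $\nu$. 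But $a_i\in I_{(1)}$ means $\bar a_i=0$ in the domain $(R^{(1)})_{\rm red}$; blowing up a local ring along an ideal generated by $\bar b$ and zeros, and then localizing at the center of the induced valuation — which is already the maximal ideal, since that valuation is centered there — gives back $(R^{(1)})_{\rm red}$. Hence $(R^{(2)})_{\rm red}\cong(R^{(1)})_{\rm red}=S$ is regular.

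The step I expect to be most delicate is verifying the dictionary of the second paragraph in full — in particular the canonical identification $R'/\rad(R')\cong(R_{\rm red})[\bar a_i/\bar b]$ together with the matching of the two centers, and the claim that $\bar\nu_1$-compatibility of a blowing up of $R_{\rm red}$ lifts to $\nu_1$-compatibility of a blowing up of $R$. The genuinely hard content, namely realizing a regular $(R_\PI)_{\rm red}$ and a regular $R/\PI$ simultaneously by a single $\nu_1$-compatible local blowing up, is not re-proved here: it is exactly the domain case established in \cite{Nov1}, and the purpose of the reduction above is precisely to bring the non-reduced situation into its scope.
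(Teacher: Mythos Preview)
Your argument is correct and takes a genuinely different route from the paper. The paper does not reduce to $R_{\rm red}$ and quote \cite{Nov1}; instead it works directly in the non-reduced ring. It proves three lemmas: first (Lemma~\ref{propelimunfloc}) that a $\nu_1$-compatible sequence of blowings up makes the $R/\PI$-module $\PI/(\PI^2+I)$ free with a basis whose images in $(R_\PI)_{\rm red}$ form a regular system of parameters; second (Lemma~\ref{lemmathatkeepregul}) that this freeness is preserved under any further blowing up along $(b,a_1,\dots,a_r)$ with $b\notin\PI$ and $a_i\in I$; and third (Lemma~\ref{proprelfreeandreg}) that freeness of $\PI/(\PI^2+I)$ together with regularity of $(R_\PI)_{\rm red}$ and $R/\PI$ forces $R_{\rm red}$ to be regular. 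Both assertions of the proposition then follow by combining these three lemmas.

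Your reduction is cleaner in one respect: once the dictionary $(R^{(1)})_{\rm red}\cong\ $(blowing up of $R_{\rm red}$) is in place, the ``moreover'' clause becomes essentially trivial, since on the reduced level one is blowing up along $(\bar b,0,\dots,0)$ and recovering $(R^{(1)})_{\rm red}$ unchanged. In fact you prove a bit more than the paper states, namely $(R^{(2)})_{\rm red}\cong(R^{(1)})_{\rm red}$, not merely that it is regular. The price is that you import the hard step --- producing a $\nu_1$-compatible blowing up that makes the domain regular when $R_\PI$ and $R/\PI$ are --- as a black box from \cite{Nov1}; you should make sure the exact statement you need (with $\nu_1$-compatibility) is indeed available there in citable form. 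The paper's approach, by contrast, is self-contained and isolates the module $\PI/(\PI^2+I)$ as the relevant invariant, which parallels the treatment of $I^n/I^{n+1}$ in the next section; your approach trades that structural information for brevity.
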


In order to prove Proposition \ref{propthatmakesreg} we will need a few lemmas.

\begin{Lema}\label{propelimunfloc}
Assume that $(R_\PI)_{\rm red}$ is regular. Then there exists a $\nu_1$-compatible local blowing up $R\lra R^{(1)}$ such that the
$R^{(1)}/\PI^{(1)}$-module $\PI^{(1)}/\left(\left(\PI^{(1)}\right)^2+I_{(1)}\right)$ is free. Moreover, if
$y^{(1)}_1,\cdots,y^{(1)}_r$ are elements of $\PI^{(1)}$ whose images in\linebreak $\PI^{(1)}/\left(\left(\PI^{(1)}\right)^2+I_{(1)}\right)$ form a basis of $\PI^{(1)}/\left(\left(\PI^{(1)}\right)^2+I_{(1)}\right)$ then their images in
$\left(R^{(1)}_{\PI^{(1)}}\right)_{\rm red}$ form a regular system of parameters of $\left(R^{(1)}_{\PI^{(1)}}\right)_{\rm red}$.
\end{Lema}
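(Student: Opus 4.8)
The plan is to realize the required blowing up by dividing a lifting to $\PI$ of a regular system of parameters of $(R_\PI)_{\rm red}$ by a single, carefully chosen element of $R\setminus\PI$.

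First I would set $d:=\dim(R_\PI)_{\rm red}$ and choose $z_1,\dots,z_d\in\PI$ whose images in $(R_\PI)_{\rm red}=R_\PI/IR_\PI$ form a regular system of parameters; this is possible because that ring is regular, so a basis of its cotangent space lifts to $\PI R_\PI$, and after clearing denominators one may take the lifts in $\PI$ itself. Then $\PI R_\PI=(z_1,\dots,z_d)R_\PI+IR_\PI$, so the finitely generated $R$-module $\PI/((z_1,\dots,z_d)R+I)$ vanishes after localizing at $\PI$; hence there is $t\in R\setminus\PI$ with
\[
t\,\PI\subseteq(z_1,\dots,z_d)R+I .
\]
I expect this to be the real content of the proof, and the only place where the regularity hypothesis is genuinely used: without it one only obtains the weaker inclusion $t\PI\subseteq(z_1,\dots,z_d)R+\PI^2+I$, and the superfluous $\PI^2$-term would fail to clear in the next step.

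Next I would let $R\lra R^{(1)}$ be the local blowing up of $R$ with respect to $\nu$ along the ideal $(t;z_1,\dots,z_d)$. This is admissible: $t\notin\PI\supseteq\SU(\nu)$, and $\nu(z_i)\ge\nu(t)$ because $z_i$ lies in $\PI=\mathfrak C_{\nu_1}(R)$ whereas $\nu_1(t)=0$; and it is $\nu_1$-compatible by construction. Exactly as in the proof of Proposition \ref{propabkeepreg} one checks that $\PI^{(1)}=(\PI,z_1/t,\dots,z_d/t)R^{(1)}$, and since $\PI=(z_1,\dots,z_d,w_1,\dots,w_s)$ for some $w_j\in\PI$ and $z_i=t\cdot(z_i/t)$ in $R^{(1)}$, this reads $\PI^{(1)}=(w_1,\dots,w_s,z_1/t,\dots,z_d/t)R^{(1)}$. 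The inclusion above gives a relation $tw_j=\sum_i c_{ij}z_i+\iota_j$ in $R$ with $c_{ij}\in R$ and $\iota_j\in I$; passing to $R^{(1)}$ and using $z_i=t(z_i/t)$ yields $t(w_j-\sum_i c_{ij}(z_i/t))=\iota_j\in I_{(1)}$. By Corollary \ref{nilrpresblup} the ideal $I_{(1)}=\rad(R^{(1)})$ is prime, and $t\notin\PI^{(1)}\supseteq I_{(1)}$, so $t$ may be cancelled and $w_j\in(z_1/t,\dots,z_d/t)R^{(1)}+I_{(1)}$. Hence
\[
\PI^{(1)}=(z_1/t,\dots,z_d/t)R^{(1)}+(\PI^{(1)})^2+I_{(1)},
\]
so the $R^{(1)}/\PI^{(1)}$-module $M^{(1)}:=\PI^{(1)}/((\PI^{(1)})^2+I_{(1)})$ is generated by the $d$ classes of $z_1/t,\dots,z_d/t$.

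Finally I would promote this to freeness. By Proposition \ref{propabkeepreg} we have $R^{(1)}_{\PI^{(1)}}\cong R_\PI$, so $(R^{(1)}_{\PI^{(1)}})_{\rm red}$ is regular of dimension $d$, and localizing $M^{(1)}$ at $\PI^{(1)}$ returns the cotangent space of that ring, which has dimension $d$ over $\kappa:=\QF(R^{(1)}/\PI^{(1)})$. Since $R^{(1)}/\PI^{(1)}$ is a noetherian local domain, any surjection $(R^{(1)}/\PI^{(1)})^{d}\twoheadrightarrow M^{(1)}$ becomes an isomorphism after tensoring with $\kappa$, so its kernel is torsion, hence zero; thus $M^{(1)}$ is free of rank $d$, the classes of $z_1/t,\dots,z_d/t$ forming a basis. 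For the last assertion, if $y^{(1)}_1,\dots,y^{(1)}_r\in\PI^{(1)}$ have images forming a basis of $M^{(1)}$, then $r=d$, and tensoring with $\kappa$ shows their images form a $\kappa$-basis of the cotangent space of the regular local ring $(R^{(1)}_{\PI^{(1)}})_{\rm red}$; since elements of the maximal ideal of a $d$-dimensional regular local ring whose residues span the cotangent space are a regular system of parameters, the claim follows. Apart from the sharpened inclusion isolated in the first step, every part of the argument is formal, using only the computations already performed for Proposition \ref{propabkeepreg}.
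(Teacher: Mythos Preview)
Your proof is correct and follows essentially the same strategy as the paper's: lift a regular system of parameters of $(R_\PI)_{\rm red}$ to $\PI$, perform a $\nu_1$-compatible local blowing up so that these lifts generate $\PI^{(1)}$ modulo $I_{(1)}$, and then verify $R^{(1)}/\PI^{(1)}$-linear independence by localizing at $\PI^{(1)}$.

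The only real difference is one of packaging. The paper completes the set $y_1,\dots,y_r$ to a generating set $y_1,\dots,y_{r+s}$ of $\PI$ and eliminates the superfluous $y_{r+k}$ one at a time via $s$ successive blowings up along $(b_k,y_1,\dots,y_r)$, invoking Lemma~\ref{presgenerunblup} at each step. You instead observe that the module $\PI/((z_1,\dots,z_d)R+I)$ vanishes at $\PI$, so a \emph{single} $t\in R\setminus\PI$ kills it; one blowing up along $(t,z_1,\dots,z_d)$ then does the job all at once. This is a mild streamlining but not a different idea. (The generation statement $\PI^{(1)}=(z_1/t,\dots,z_d/t,w_1,\dots,w_s)R^{(1)}$ that you appeal to is exactly Lemma~\ref{presgenerunblup}, not Proposition~\ref{propabkeepreg}.) Likewise, your freeness argument---the kernel of $(R^{(1)}/\PI^{(1)})^d\twoheadrightarrow M^{(1)}$ is torsion because it dies over $\kappa$, and torsion-free because it sits inside a free module over a domain---is a slick repackaging of the paper's direct check that a relation $\sum a_iy_i\in\PI^2+I$ forces $a_i\in\PI$ after passing to $R_\PI$.

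One small remark on your commentary: the ``sharpened'' inclusion $t\PI\subseteq(z_1,\dots,z_d)R+I$ is not where regularity is genuinely used---any generating set of $\PI R_\PI/IR_\PI$ gives such an inclusion. Regularity enters only to guarantee that $d$ generators suffice and that the localized module $M^{(1)}_{\PI^{(1)}}$ has $\kappa$-dimension exactly $d$, which is what drives your freeness argument.
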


\begin{Lema}\label{lemmathatkeepregul}
Let $\pi:R\lra R^{(1)}$ be a local blowing up along an ideal $(b,a_1,\ldots,a_r)$ with $b\notin\PI$ and $a_1,\ldots,a_r\in I$. If
$\PI/\left(\PI^2+I\right)$ is a free $R/\PI$-module, then
$$
\PI^{(1)}/\left(\left(\PI^{(1)}\right)^2+I_{(1)}\right)
$$
is a free $R^{(1)}/\PI^{(1)}$-module.
\end{Lema}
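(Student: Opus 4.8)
\emph{The plan} is to show that $\pi$ induces a ring isomorphism $R_{\rm red}\xrightarrow{\ \sim\ }\left(R^{(1)}\right)_{\rm red}$ carrying the image of $\PI$ onto that of $\PI^{(1)}$; the $R^{(1)}/\PI^{(1)}$-module whose freeness we must establish is then literally a copy of the $R/\PI$-module we are assuming to be free. First I would pass to the reduced rings. Since the nilradical lies in every prime, $I\subseteq\PI$ and $I_{(1)}\subseteq\PI^{(1)}$; writing $\bar\PI$ and $\bar\PI^{(1)}$ for the images of $\PI$ and $\PI^{(1)}$ in $R_{\rm red}=R/I$ and in $\left(R^{(1)}\right)_{\rm red}=R^{(1)}/I_{(1)}$, one has canonical identifications
\[
\PI/(\PI^2+I)=\bar\PI/\bar\PI^{\,2},\qquad\PI^{(1)}/\left(\left(\PI^{(1)}\right)^2+I_{(1)}\right)=\bar\PI^{(1)}/\left(\bar\PI^{(1)}\right)^{2}
\]
of modules over $R_{\rm red}/\bar\PI=R/\PI$ and over $\left(R^{(1)}\right)_{\rm red}/\bar\PI^{(1)}=R^{(1)}/\PI^{(1)}$, respectively. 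Since $a_1,\dots,a_r\in I\subseteq\PI$ (the prime $\PI$ contains the nilradical) and $b\notin\PI$, the blowing up $\pi$ is $\nu_1$-compatible, so by Proposition \ref{propabkeepreg} the canonical map $R/\PI\to R^{(1)}/\PI^{(1)}$ induced by $\pi$ is an isomorphism. Hence it suffices to produce the isomorphism $R_{\rm red}\xrightarrow{\ \sim\ }\left(R^{(1)}\right)_{\rm red}$ above: it will induce an isomorphism $\bar\PI/\bar\PI^{\,2}\xrightarrow{\ \sim\ }\bar\PI^{(1)}/\left(\bar\PI^{(1)}\right)^{2}$ compatible with $R/\PI\cong R^{(1)}/\PI^{(1)}$, and freeness of the source over $R/\PI$ will force freeness of the target.

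\emph{Constructing the isomorphism on the reduced rings.} Write $R^{(1)}=R'_{\mathfrak C_\nu(R')}$ with $R'=R/J(b)[a_1/b,\dots,a_r/b]$. Because $I$ is the only associated prime of $R$ and $b\notin\PI\supseteq I$, Remark \ref{rmkonasspide} gives $J(b)=(0)$, so $R\hookrightarrow R_b$ and $R'=R[a_1/b,\dots,a_r/b]\subseteq R_b$. Since $a_i\in I=\SU(\nu)$, each $a_i/b$ is nilpotent in $R'$ and hence lies in the nilradical $I'$ of $R'$; therefore the composite $R\to R'\to R'/I'$ is surjective, and its kernel is $R\cap I'=I$ (the nilradical of a subring is the contraction of the ambient nilradical). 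Thus $R/I\xrightarrow{\ \sim\ }R'/I'$. As the nilradical commutes with localization, $I_{(1)}=I'R^{(1)}$ and
\[
\left(R^{(1)}\right)_{\rm red}=\left(R'/I'\right)_{\bar S}=\left(R/I\right)_{\bar S},
\]
where $\bar S\subseteq R/I$ is the image of the multiplicative set $R'\setminus\mathfrak C_\nu(R')$. Now $\nu$ is centered on $R$, so the valuation induced by $\nu$ on $R/I=R/\SU(\nu)$ has center the maximal ideal of the local ring $R/I$; as every element of $\bar S$ is the class of some $x\in R'$ with $\nu(x)=0$, it is a unit of $R/I$, and the localization is trivial. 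This yields $R/I\xrightarrow{\ \sim\ }\left(R^{(1)}\right)_{\rm red}$, the isomorphism being induced by $\pi$.

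\emph{Matching the ideals and finishing.} Along $\pi$ followed by reduction, i.e. along the surjection $R\to R/I\xrightarrow{\ \sim\ }\left(R^{(1)}\right)_{\rm red}$, the preimage of $\bar\PI^{(1)}=\PI^{(1)}/I_{(1)}$ is $\{a\in R:\nu_1(\pi(a))>0\}$; since $\nu_1$ on $R^{(1)}$ restricts along $\pi$ to $\nu_1$ on $R$, this set equals $\{a\in R:\nu_1(a)>0\}=\PI$. Hence the isomorphism just constructed carries $\bar\PI$ onto $\bar\PI^{(1)}$, and by the first paragraph the lemma follows. The one step with genuine content is the isomorphism $R_{\rm red}\cong\left(R^{(1)}\right)_{\rm red}$; it goes through because forming the chart $R'$ only adjoins the nilpotents $a_i/b$ (invisible after reduction) and the subsequent localization at $\mathfrak C_\nu(R')$ does nothing modulo the nilradical, $\nu$ being centered on $R$. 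Everything else is routine manipulation of nilradicals, localizations, and Proposition \ref{propabkeepreg}.
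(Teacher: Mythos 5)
Your proof is correct, but it follows a genuinely different route from the paper. The paper argues directly on the module: it picks $y_1,\dots,y_s\in\PI$ whose classes form an $R/\PI$-basis of $\PI/(\PI^2+I)$ and verifies by explicit manipulation of fractions in $R'=R[a_1/b,\dots,a_r/b]$ that the classes of $\pi(y_1),\dots,\pi(y_s)$ both generate and remain linearly independent in $\PI^{(1)}/\bigl(\bigl(\PI^{(1)}\bigr)^2+I_{(1)}\bigr)$, using $a_i\in I$ for generation and $b\notin\PI$ with $I$ prime for independence. You instead prove the stronger structural fact that such a blowing up is invisible after reduction: since $J(b)=(0)$ by the section's standing hypothesis (via Remark \ref{rmkonasspide}), the chart only adjoins the nilpotent fractions $a_i/b$, so $R/I\simeq R'/I'$, and the subsequent localization at $\mathfrak C_\nu(R')$ is trivial modulo nilpotents because $\mathfrak C_\nu(R)=\MI$; matching $\bar\PI$ with $\bar\PI^{(1)}$ via $\nu_1$ then makes the two modules in question literally isomorphic over isomorphic rings, so freeness (and much more) transfers. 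The one spot worth a line of justification is the claim that every element of $\bar S$ is a unit of $R/I$: for $x\in R'\setminus\mathfrak C_\nu(R')$ its class in $R'/I'\simeq R/I$ is the constant term $x_0\in R$, and $\nu(x_0)=\nu(x)=0$ because $x-x_0$ is nilpotent and hence has value $\infty$, so $x_0\notin\MI=\mathfrak C_\nu(R)$ is a unit. Your approach buys more than the lemma: the isomorphism $R_{\rm red}\simeq\bigl(R^{(1)}\bigr)_{\rm red}$ immediately gives the ``Moreover'' clause of Proposition \ref{propthatmakesreg} (regularity of $\bigl(R^{(2)}\bigr)_{\rm red}$) without re-invoking Lemma \ref{proprelfreeandreg}, at the price of leaning on the standing assumptions of the section ($I$ the only associated prime, $\nu$ centered on the local ring $R$), which the paper's more computational proof uses as well.
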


\begin{Lema}\label{proprelfreeandreg}
Take $y_1,\ldots,y_r\in \PI$ and $x_1,\ldots,x_t\in \MI\setminus\PI$ whose images form a regular system of parameters of
$\left(R_\PI\right)_{\rm red}$ and $R/\PI$, respectively. If $\PI/(\PI^2+I)$ is an $R/\PI$-free module with basis
$y_1+(\PI^2+I),\ldots,y_r+(\PI^2+I)$, then $R_{\rm red}$ is regular.
\end{Lema}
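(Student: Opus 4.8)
The plan is to prove that $R_{\rm red}=R/I$ is regular by a dimension count: I will bound $\dim R/I$ from below by $r+t$ and bound the embedding dimension of $R/I$ from above by $r+t$, and then invoke the general inequality $\dim A\le\dim_{A/\MI_A}\MI_A/\MI_A^2$, valid for any Noetherian local ring $A$, to squeeze out equality. Set $\overline\PI:=\PI/I\in\Sp(R/I)$. Since forming the reduced ring commutes with localization and with passage to a quotient, $(R/I)_{\overline\PI}=(R_\PI)_{\rm red}$ and $(R/I)/\overline\PI=R/\PI$. By hypothesis $(R_\PI)_{\rm red}$ is regular with regular system of parameters $y_1,\dots,y_r$, so $\hei\overline\PI=\dim(R_\PI)_{\rm red}=r$; similarly $R/\PI$ is regular with regular system of parameters $x_1,\dots,x_t$, so $\dim(R/I)/\overline\PI=\dim R/\PI=t$. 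Concatenating a chain of primes of $R/I$ of length $r$ ending at $\overline\PI$ with one of length $t$ beginning at $\overline\PI$ yields a chain of length $r+t$, so $\dim R_{\rm red}\ge r+t$. (Only this elementary inequality is needed; no catenarity of $R$ is invoked.)

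Next I would bound the embedding dimension. Because $x_1,\dots,x_t$ generate the maximal ideal $\MI/\PI$ of $R/\PI$, we get $\MI=(x_1,\dots,x_t)R+\PI$. Because $y_1+(\PI^2+I),\dots,y_r+(\PI^2+I)$ are in particular a generating set of the $R/\PI$-module $\PI/(\PI^2+I)$, we get $\PI=(y_1,\dots,y_r)R+\PI^2+I$. Combining these two equalities and using $\PI^2\subseteq\MI^2$,
\[
\MI=(x_1,\dots,x_t,y_1,\dots,y_r)R+\MI^2+I .
\]
Reducing modulo $I$ shows that the maximal ideal of $R_{\rm red}$ is generated modulo its square by the $r+t$ images of $x_1,\dots,x_t,y_1,\dots,y_r$; hence the embedding dimension of $R_{\rm red}$ is at most $r+t$.

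Putting the two bounds together with the general inequality mentioned above, applied to $A=R_{\rm red}$, gives
\[
r+t\le\dim R_{\rm red}\le\dim_{R_{\rm red}/\MI R_{\rm red}}\bigl(\MI R_{\rm red}/\MI^2R_{\rm red}\bigr)\le r+t ,
\]
so all three quantities are equal; thus the Krull dimension of $R_{\rm red}$ equals its embedding dimension, i.e.\ $R_{\rm red}$ is a regular local ring. The same computation shows that the images of $x_1,\dots,x_t,y_1,\dots,y_r$ form a regular system of parameters of $R_{\rm red}$.

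I do not expect a genuine obstacle here: the argument is bookkeeping with dimensions and Nakayama-style generation statements. The only points needing care are the two identifications $(R/I)_{\overline\PI}=(R_\PI)_{\rm red}$ and $(R/I)/\overline\PI=R/\PI$, and the fact that we use only the ``easy'' inequality $\hei\overline\PI+\dim(R/I)/\overline\PI\le\dim R/I$, so that the (generally false) opposite inequality here — which would require catenarity of $R$, a hypothesis we do not have — is never needed. It is also worth noting that the freeness of $\PI/(\PI^2+I)$ is more than the proof of regularity strictly requires, since only the fact that the $y_i$ generate this module enters; freeness is the natural hypothesis inherited from Lemmas \ref{propelimunfloc} and \ref{lemmathatkeepregul}, and it additionally makes the regular system of parameters of $R_{\rm red}$ explicit.
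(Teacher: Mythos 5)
Your proof is correct and takes essentially the same route as the paper's: the lower bound $\dim R_{\rm red}\ge r+t$ by concatenating chains through $\overline{\PI}=\PI/I$, and the upper bound coming from the fact that the images of $x_1,\dots,x_t,y_1,\dots,y_r$ generate the maximal ideal of $R_{\rm red}$ modulo its square. The only cosmetic difference is that the paper applies Nakayama directly to get $\PI=(y_1,\dots,y_r)+I$ and bounds $\dim R_{\rm red}$ by the number of generators of the maximal ideal, while you package the same Krull--Nakayama step as the general inequality between dimension and embedding dimension.
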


\begin{proof}[Proof of Proposition \ref{propthatmakesreg}, assuming Lemmas \ref{propelimunfloc}, \ref{lemmathatkeepregul} and
\ref{proprelfreeandreg}]
We apply Lemma \ref{propelimunfloc} to obtain a $\nu_1$-compatible local blowing up $R\lra R^{(1)}$ and $y^{(1)}_1,\ldots,y^{(1)}_r\in
R^{(1)}$ such that their images in $\PI^{(1)}/\left(\left(\PI^{(1)}\right)^2+I_{(1)}\right)$ form an $\left(R^{(1)}\right)_{\rm red}$- basis and their images in $\left(R^{(1)}_{\PI^{(1)}}\right)_{\rm red}$ form a regular system of parameters. Moreover, by Proposition \ref{propabkeepreg}, $R^{(1)}/\PI^{(1)}$ is regular. Also, by Lemma \ref{lemmathatkeepregul} and Proposition \ref{propabkeepreg}, for every local blowing up $R^{(1)}\lra R^{(2)}$ along an ideal $(b,a_1,\ldots,a_r)$ with $b\notin \PI^{(1)}$ and
$a_1,\ldots,a_r\in I_{(1)}$ the hypotheses of Lemma \ref{proprelfreeandreg} are satisfied for $R^{(2)}$. Hence, we obtain that
$\left(R^{(1)}\right)_{\rm red}$ and $\left(R^{(2)}\right)_{\rm red}$ are regular.
\end{proof}

We now proceed with the proofs of Lemmas \ref{propelimunfloc}, \ref{lemmathatkeepregul} and \ref{proprelfreeandreg}.
\begin{Lema}\label{presgenerunblup}
Take generators $y_1,\ldots,y_r,y_{r+1},\ldots,y_{r+s}$ of $\PI$ and $b\notin \PI$. Let
$$
\pi:R\lra R^{(1)}
$$
be the local blowing up along the ideal $(b,y_1,\ldots,y_r)$. Set
\[
y_i^{(1)}=\pi(y_i)/b\mbox{ for }1\leq i\leq r\mbox{ and }y_{r+k}^{(1)}=\pi(y_{r+k})\mbox{ for }1\leq k\leq s.
\]
Then $\PI^{(1)}$ is generated by $y_1^{(1)},\ldots,y_{r+s}^{(1)}$.
\end{Lema}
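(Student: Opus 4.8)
The plan is to work inside $R_b$ (more precisely inside $R_0 := R/J(b)$ localized at $b$, though by Remark \ref{rmkonasspide} when $I$ is the only associated prime we may ignore $J(b)$) and chase an arbitrary element of $\PI^{(1)}$ back to a polynomial expression in the $y_i/b$. Recall $R^{(1)} = R'_{\mathfrak C_\nu(R')}$ with $R' = R_0[y_1/b,\dots,y_r/b] \subseteq R_b$, and $\PI^{(1)} = \mathfrak C_{\nu_1}(R^{(1)})$. First I would observe that since the denominators coming from the localization at $\mathfrak C_\nu(R')$ have $\nu_1$-value zero, it suffices to show that every element of $R' \cap \mathfrak C_{\nu_1}(R_b)$ lies in the ideal of $R'$ generated by $y_1^{(1)},\dots,y_{r+s}^{(1)}$; the general element of $\PI^{(1)}$ is then such an element divided by a $\nu_1$-unit of $R'$, which does not change membership in the ideal generated by the $y_i^{(1)}$ over $R^{(1)}$.

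Next I would take $z \in R'$ with $\nu_1(z) > 0$ and write $z = P(y_1/b,\dots,y_r/b)$ for some $P(X_1,\dots,X_r) \in R_0[X_1,\dots,X_r]$. Split off the constant term: $z = p_0 + z_1$ where $p_0 = P(0,\dots,0) \in R_0$ and $z_1 = \sum_{i=1}^r (y_i/b)\, P_i(y_1/b,\dots,y_r/b)$ for suitable $P_i$, exactly as in the proof of Proposition \ref{propabkeepreg}. Since $\nu_1(y_i/b) = \nu_1(y_i) - \nu_1(b) > 0$ (because $y_i \in \PI = \mathfrak C_{\nu_1}(R)$ and $b \notin \PI$), we get $\nu_1(z_1) > 0$, hence $\nu_1(p_0) > 0$, i.e. $p_0 \in \PI$ after identifying $R_0$ with its image. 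Now $\PI$ is generated by $y_1,\dots,y_{r+s}$, so write $p_0 = \sum_{j=1}^{r+s} c_j y_j$ with $c_j \in R$; passing to $R_0 \subseteq R'$ and using that $y_j = b\cdot(y_j/b) = b\, y_j^{(1)}$ for $j \le r$ while $y_{r+k} = y_{r+k}^{(1)}$ for $k \ge 1$, we see $p_0$ lies in the ideal of $R'$ generated by $y_1^{(1)},\dots,y_{r+s}^{(1)}$. The term $z_1$ manifestly lies in the ideal generated by $y_1^{(1)},\dots,y_r^{(1)}$ since each monomial of $z_1$ contains a factor $y_i/b = y_i^{(1)}$. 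Therefore $z$ lies in that ideal, and so does every element of $\PI^{(1)}$.

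Finally I would note the reverse inclusion is trivial: each $y_j^{(1)}$ has $\nu_1(y_j^{(1)}) > 0$, hence $y_j^{(1)} \in \mathfrak C_{\nu_1}(R') \subseteq \PI^{(1)}$, so the ideal they generate is contained in $\PI^{(1)}$. Combining the two inclusions gives $\PI^{(1)} = (y_1^{(1)},\dots,y_{r+s}^{(1)})$. The only mildly delicate point — the place I would be most careful — is the bookkeeping with $J(b)$ and the localization at $\mathfrak C_\nu(R')$: one must make sure that clearing a $\nu_1$-unit denominator and passing between $R'$ and $R^{(1)}$ genuinely preserves the property of being in the ideal generated by the $y_j^{(1)}$, which it does because a unit of $R^{(1)}$ times an ideal equals that ideal. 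Everything else is the same constant-term-splitting trick already used for Proposition \ref{propabkeepreg}.
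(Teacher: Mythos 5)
Your proposal is correct and follows essentially the same route as the paper's proof: the same constant-term splitting $p=p_0+\sum_{i=1}^r (y_i/b)\,p_i$, the observation that $\nu_1(p_0)>0$ so $p_0\in\PI=(y_1,\ldots,y_{r+s})$, and the rewriting via $y_j=b\,y_j^{(1)}$ for $j\leq r$. The only cosmetic difference is that you first reduce to elements of $R'$ and then divide by the $\nu_1$-unit denominator, whereas the paper treats a general element $p/q\in\PI^{(1)}$ directly; this changes nothing of substance.
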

\begin{proof}
Obviously $y^{(1)}_i\in\PI^{(1)}$ for every $i$, $1\leq i\leq r+s$. Take an element $p/q\in \PI^{(1)}$. This implies that
$p=p(y_1/b,\ldots,y_r/b)$ for some $p(X_1,\ldots,X_r)\in R[X_1,\ldots,X_r]$ (see Remark \ref{rmkonasspide}). If we set $p_0=p(0,\ldots,0)$, then
\[
p=p_0+\frac{y_1}{b}p_1+\ldots+\frac{y_r}{b}p_r\mbox{, for some }p_1,\ldots,p_r\in R'.
\]
This implies that $p_0\in \PI$. Hence, there exist
$a_1,\ldots,a_{r+s}\in R$ such that $p_0=a_1y_1+\ldots+a_{r+s}y_{r+s}$. Thus
\[
\frac pq=\sum_{i=1}^r\frac{\pi(ba_i)+p_i}{q}y_i^{(1)}+\sum_{k=1}^{s}\frac{\pi(a_{r+k})}{q}y^{(1)}_{r+k}\in \left(y_1^{(1)},\ldots,y_{r+s}^{(1)}\right)R^{(1)}.
\]
This concludes our proof.
\end{proof}

\begin{proof}[Proof of Lemma \ref{propelimunfloc}] Since $(R_\PI)_{\rm red}$ is regular there are elements $y_1,\ldots,y_r\in\PI$ such that their images in $(R_\PI)_{\rm red}$ form a regular system of parameters. The first step is to reduce to the case when
$y_1,\ldots,y_r$ generate $\PI$.

Assume that $y_1,\ldots,y_r$ do not generate $\PI$. Choose $y_{r+1},\ldots,y_{r+s}\in\PI$ such that
$y_1,\ldots,y_r,y_{r+1},\ldots,y_{r+s}$ generate $\PI$. For each $k$, $1\leq k\leq s$, we can find $b_k\in R\setminus\PI$, $b_{1k},\ldots, b_{rk}\in R$ and $h_k\in (y_1,\ldots,y_r)^2$ such that
\[
b_ky_{r+k}+b_{1k}y_1+\ldots+b_{rk}y_r+h_k\in I.
\]
Consider the local blowing up $\pi:R\lra R^{(1)}$ along $(b_1,y_1,\ldots,y_r)$. It follows that
\[
\pi(b_1)\left(y^{(1)}_{r+1}+\pi(b_{11})y_1^{(1)}+\ldots+\pi(b_{r1})y^{(1)}_r+h_1^{(1)}\right)\in I_{(1)}
\]
and
\[
\pi(b_k)y^{(1)}_{r+k}+\pi(b_1b_{1k})y_1^{(1)}+\ldots+\pi(b_1b_{rk})y^{(1)}_r+h_k^{(1)}\in I_{(1)}\mbox{ for }2\leq k\leq s,
\]
where $y_i^{(1)}=\pi(y_i)/{b_1}$ for $1\leq i\leq r$ and $y_{r+k}^{(1)}=\pi(y_{r+k})$ and some $h^{(1)}_k\in \left(y_1^{(1)},\ldots,y_r^{(1)}\right)^2$ for $1\leq i\leq s$. Since $I_{(1)}$ is prime and $\pi(b_1)\notin I_{(1)}$ we obtain that 
\[
y^{(1)}_{r+1}+\pi(b_{11})y_1^{(1)}+\ldots+\pi(b_{r1})y^{(1)}_r+h_1^{(1)}\in I_{(1)}.
\]
Consequently,
\[
\left(y^{(1)}_1,\ldots,y^{(1)}_r,y^{(1)}_{r+1},\ldots,y^{(1)}_{r+s}\right)R^{(1)}=\left(y^{(1)}_1,\ldots,y^{(1)}_r,y^{(1)}_{r+2},\ldots,y^{(1)}_{r+s}\right)R^{(1)}.
\]
We proceed inductively to obtain a $\nu_1$-compatible local blowing up $R\lra R^{(s)}$ such that
\[
\left(y^{(s)}_1,\ldots,y^{(s)}_r,y^{(s)}_{r+1},\ldots,y^{(s)}_{r+s}\right)R^{(s)}=\left(y^{(s)}_1,\ldots,y^{(s)}_r\right)R^{(s)}.
\]
By Lemma \ref{presgenerunblup}, we have $\PI^{(s)}=\left(y^{(s)}_1,\ldots,y^{(s)}_r,y^{(s)}_{r+1},\ldots,y^{(s)}_{r+s}\right)R^{(s)}$ and by Lemma \ref{propabkeepreg} the images of $y^{(s)}_1,\ldots,y^{(s)}_r$ in $\left(R^{(s)}_{\PI^{(s)}}\right)_{\rm red}$ form a regular system of parameters. This means that $y^{(s)}_1,\ldots,y^{(s)}_r$ generate $\PI^{(s)}$. Thus we have reduced the problem to the case when $\left(y_1,\ldots,y_r\right)$ generate $\PI$ and will make this assumption from now on.

Now, the only non-trivial fact that remains to be checked is that the images of $y_1,\ldots,y_r$ in $\PI/\PI^2+I$ are $R/\PI$-linearly independent. Take $a_1,\ldots,a_r\in R$ such that
\[
a_1y_1+\ldots+a_ry_r\in \PI^2+I.
\]
Since the images of $y_1,\ldots,y_r$ in $\left(R_\PI\right)_{\rm red}$ form a regular system of parameters, their images in $\PI
R_\PI/(\PI^2+I)R_\PI$ form an $R_\PI/\PI R_\PI$-basis of $\PI R_\PI/(\PI^2+I)R_\PI$. This implies that $a_1/1,\ldots,a_r/1\in\PI R_\PI$ and consequently $a_1,\ldots,a_r\in\PI$.

This completes the proof of the Lemma.
\end{proof}

\begin{proof}[Proof of Lemma \ref{lemmathatkeepregul}]
Take $y_1,\ldots,y_s\in\PI$ such that their images form an $R/\PI$-basis of $\PI/\left(\PI^2+I\right)$. We claim that the images of $\pi(y_1),\ldots,\pi(y_s)$ form an $R^{(1)}/\PI^{(1)}$-basis of $\PI^{(1)}/\left(\left(\PI^{(1)}\right)^2+I_{(1)}\right)$. Take an element $\alpha\in \PI^{(1)}$. Then $\alpha=p/q$ where $p,q\in R':=R[a_1/b,\ldots,a_r/b]$ with $\nu_1(p)>0$ and $\nu(q)=0$. Set $p_0=p(0,\ldots,0)$ and write
\[
p=p_0+\frac{a_1}{b}p_1+\ldots+\frac{a_r}{b}p_r\mbox{ for some }p_1,\ldots,p_r\in R'.
\]
This implies that $p_0\in\PI$. By our assumption, there exist $c_1,\ldots,c_s\in\PI$, $g\in\PI^2$ and $h\in I$ such that
\[
p_0=c_1y_1+\ldots+c_sy_r+g+h.
\]
Consequently,
\[
\alpha=\frac{\pi(c_1)}{q}\pi(y_1)+\ldots+\frac{\pi(c_s)}{q}\pi(y_s)+\frac{\pi(g)}{q}+\frac{\pi(h)}{q}+\frac{a_1}{b}\frac{p_1}{q}+\ldots+\frac{a_r}{b}\frac{p_r}{q}.
\]
Since $a_1,\ldots,a_r,h\in I$, we have that
\[
\frac{\pi(h)}{q}+\frac{a_1}{b}\frac{p_1}{q}+\ldots+\frac{a_r}{b}\frac{p_r}{q}\in I_{(1)}.
\]
This and the fact that $\pi(g)/q\in \left(\PI^{(1)}\right)^2$ imply that the images of $\pi(y_1),\ldots,\pi(y_r)$ generate
$\PI^{(1)}/\left(\left(\PI^{(1)}\right)^2+I_{(1)}\right)$.

Now assume that there exists $\alpha_i=(a_i/b^{l_i})/(c_i/b^{m_i})\in R^{(1)}$, $1\leq i\leq r$, such that
\[
\alpha_1\pi(y_1)+\ldots+\alpha_r\pi(y_r)\in \left(\PI^{(1)}\right)^2+I_{(1)}.
\]
Then there exists $n\in \N$ such that
\[
a_1b^ny_1+\ldots+a_rb^ny_r\in \PI^2+I.
\]
This implies that $a_ib^n\in\PI$ for every $i$, $1\leq i\leq r$. Since $b\notin\PI$, this implies that $a_1,\ldots,a_r\in\PI$. Therefore, $\alpha_1,\ldots,\alpha_r\in \PI^{(1)}$, which concludes our proof.
\end{proof}

\begin{proof}[Proof of Lemma \ref{proprelfreeandreg}]
Set $\PI'=\{a+I\in R_{\rm red}\mid a\in\PI\}$. Since the images of the $y_i$'s in $\PI/(\PI^2+I)$ form a basis of $\PI/(\PI^2+I)$, we conclude that $(y_1,\ldots,y_r)+\PI^2+I=\PI$. Applying Nakayama's Lemma (corollary of Theorem 2.2 of \cite{Mat}) we conclude that
$(y_1,\ldots,y_r)+I=\PI$ and consequently $y_1+I,\ldots, y_r+I$ generate $\PI'$.

Since the images of $y_1,\ldots,y_r,x_1,\ldots,x_t$ in $R_{\rm red}$ generate $\MI'=\{a+I\in R_{\rm red}\mid a\in\MI\}$ we conclude that $r+t\geq \dim R_{\rm red}$. Also, since $r= \dim \left(R_\mathfrak{p}\right)_{\rm red}=\hei
\left(\PI'\right)$ and $t=\dim \left(R/\PI\right)=\hei(\MI/\PI)=\hei\left(\MI'/\PI'\right)$ we have
\[
\dim (R_{\rm red})=\hei\left(\MI'\right)\geq \hei\left(\PI'\right)+\hei\left(\MI'/\PI'\right)=r+t\geq \dim (R_{\rm red}).
\]
Therefore, $r+t=\dim (R_{\rm red})$ and hence $R_{\rm red}$ is regular.
\end{proof}

\section{Making $I^n/I^{n+1}$ free}\label{sectiomakeinin1free}

Let $R$ be a local ring and $\nu$ a valuation centered on $R$. Assume that
$$
\nu=\nu_1\circ\nu_2
$$
and denote by $\PI$ the center of $\nu_1$ on $R$. As usual, we set $I=\rad(R)$ and $I_\PI:=\rad(R_\PI)$. Also, for a local blowing up $R\lra R^{(k)}$ we set $I_{(k)}=\rad\left(R^{(k)}\right)$ and $I_{\PI^{(k)}}:=\rad\left(R^{(k)}_{\PI^{(k)}}\right)$. Assume that $I$ is the only associated prime ideal of $R$. The main goal of this section is to prove the following proposition.

\begin{Prop}\label{makmodfree}
Assume that $I_\PI^n/I_\PI^{n+1}$ is an $\left(R_\PI\right)_{\rm red}$-free module for every $n\in\N$. Then there exists a local blowing up $R\lra R^{(1)}$ with respect to $\nu$ along an ideal $(b,a_1,\ldots,a_r)$ with $b\notin\PI$ and $a_1,\ldots,a_r\in I$ such that the $\left(R^{(1)}\right)_{\rm red}$-module $I_{(1)}^n/I_{(1)}^{n+1}$ is free for every $n\in\N$.
\end{Prop}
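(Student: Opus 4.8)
The plan is to follow the architecture of the proof of Proposition \ref{propthatmakesreg}: by a finite sequence of local blowings up along ideals $(b,a_1,\dots,a_r)$ with $b\notin\PI$ and $a_1,\dots,a_r\in I$ (call such a blowing up \emph{admissible}), reduce to a situation in which the freeness of all the graded pieces becomes formal. Throughout, write $A:=R_{\rm red}=R/I$; since $I=\rad(R)$ is the unique associated prime of $R$ it is in particular prime, so $A$ is a noetherian local \emph{domain}, and put $\mathfrak{q}:=\PI/I\in\Spec A$, so that $A_{\mathfrak{q}}=(R_\PI)_{\rm red}$ and $\left(I^n/I^{n+1}\right)_{\mathfrak{q}}=I_\PI^n/I_\PI^{n+1}$. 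First I would record the stability properties of an admissible blowing up $\pi\colon R\lra R^{(1)}$. It is $\nu_1$-compatible, so by Proposition \ref{propabkeepreg} the maps $R_\PI\lra R^{(1)}_{\PI^{(1)}}$ and $R/\PI\lra R^{(1)}/\PI^{(1)}$ are isomorphisms; in particular the hypothesis that $I_\PI^n/I_\PI^{n+1}$ be $(R_\PI)_{\rm red}$-free is inherited by $R^{(1)}$, and the rank $\rho_n:=\rk_{A_{\mathfrak{q}}}\left(I_\PI^n/I_\PI^{n+1}\right)$ is unchanged. By Corollary \ref{nilrpresblup}, $I_{(1)}=\rad\left(R^{(1)}\right)$ is again the unique associated prime of $R^{(1)}$, hence prime. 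A computation parallel to the one in the proof of Proposition \ref{propabkeepreg} shows that $R\lra R^{(1)}$ is injective, that $I_{(1)}=IR^{(1)}+\left(a_1/b,\dots,a_r/b\right)R^{(1)}$, and that $\left(R^{(1)}\right)_{\rm red}=A$ (the elements $a_i/b$ are nilpotent, being quotients of the nilpotent $a_i$ by the nonzerodivisor $b$; and $b$ is a nonzerodivisor because $b\notin I$ and $I$ is the unique associated prime). Finally, as in Lemma \ref{compobblupisblup}, a composition of finitely many admissible blowings up is again admissible. Since $R$ is noetherian, $I^{N+1}=(0)$ for some $N$, so it is enough to reach, after a finite sequence of admissible blowings up, a ring whose pieces $I_{(1)}^n/I_{(1)}^{n+1}$ are all free, $1\le n\le N$.

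For each $n$ let $\mu_n$ be the minimal number of generators of the $A$-module $I^n/I^{n+1}$; always $\rho_n\le\mu_n$. The heart of the argument is to show that a finite sequence of admissible blowings up brings the nonnegative integer $\sum_{n=1}^N(\mu_n-\rho_n)$ down to $0$. Suppose $\mu_{n_0}>\rho_{n_0}$ for some $n_0$, and pick a minimal generating system $z_1,\dots,z_{\mu_{n_0}}\in I^{n_0}$ of $I^{n_0}/I^{n_0+1}$. Their images generate the free $A_{\mathfrak{q}}$-module $\left(I^{n_0}/I^{n_0+1}\right)_{\mathfrak{q}}$, which has rank $\rho_{n_0}<\mu_{n_0}$, so they satisfy a nontrivial relation over $A_{\mathfrak{q}}$; since the module of relations is a direct summand of $A_{\mathfrak{q}}^{\mu_{n_0}}$ (the surjection onto a free module splits), it contains a vector with a unit coordinate (otherwise Nakayama forces it to vanish). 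Clearing denominators and relabelling, we obtain $s\in R\setminus\PI$ and $c_2,\dots,c_{\mu_{n_0}}\in R$ with
\[
s\,z_1+c_2z_2+\dots+c_{\mu_{n_0}}z_{\mu_{n_0}}\in I^{n_0+1},
\]
which is an analogue of the relations used in the proof of Lemma \ref{propelimunfloc}. Fixing generators $g_1,\dots,g_k$ of $I$ and performing a suitable admissible blowing up along an ideal built from $s$, the $g_j$ and (some of the) $z_i$, one divides the $g_j$ --- hence, after bookkeeping, everything lying in powers of $I$ --- by $s$; the displayed relation then forces the image of $z_1$ in $I_{(1)}^{n_0}/I_{(1)}^{n_0+1}$ into the $\left(R^{(1)}\right)_{\rm red}$-span of the images of $z_2,\dots,z_{\mu_{n_0}}$, the essential cancellation being that $s$ is a nonzerodivisor in $R^{(1)}$ (as $I_{(1)}$ is prime and $s\notin\PI\supseteq I$). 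So $\mu_{n_0}$ drops by at least one, the $\rho_n$ are unchanged, and one checks no other $\mu_n$ grows; iterating finitely often yields $\mu_n=\rho_n$ for all $n$.

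Once $\mu_n=\rho_n$ for every $n$ (after renaming the ring so obtained), freeness is automatic. Indeed, a minimal generating system of $I^n/I^{n+1}$ gives a surjection $A^{\mu_n}\twoheadrightarrow I^n/I^{n+1}$ whose kernel $K_n$ is a submodule of the free module $A^{\mu_n}$ over the domain $A$, hence torsion free. Localizing at $\mathfrak{q}$ gives a surjection $A_{\mathfrak{q}}^{\mu_n}\twoheadrightarrow I_\PI^n/I_\PI^{n+1}$ of free $A_{\mathfrak{q}}$-modules of the same rank $\mu_n=\rho_n$, hence an isomorphism, so $\left(K_n\right)_{\mathfrak{q}}=0$ and $K_n$ is a torsion module. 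A module that is simultaneously torsion free and torsion is $0$, so $I^n/I^{n+1}\cong A^{\mu_n}$ is free for every $n$, which is exactly the assertion of the proposition.

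I expect the main obstacle to be the middle step: choosing the correct admissible blowing up in the reduction and carrying out the degree bookkeeping --- tracking into which power of $I_{(1)}$ each relevant element falls after division by $s$ --- simultaneously for all $n$, and verifying that the invariants $\mu_n$ do not increase. This is genuinely more delicate than the corresponding reduction in Lemma \ref{propelimunfloc}, where one divides into the \emph{prime} ideal $I_{(1)}$ itself rather than into one of its powers $I_{(1)}^{n_0+1}$; once this step is secured, the stability statements of the first paragraph and the ``torsion free plus torsion'' observation make the rest routine.
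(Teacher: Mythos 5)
Your first and third paragraphs are essentially sound: admissible blowings up are $\nu_1$-compatible, so Proposition \ref{propabkeepreg} and Corollary \ref{nilrpresblup} give the stability you need, the identification $\left(R^{(1)}\right)_{\rm red}\cong R_{\rm red}$ is correct (only nilpotence of the $a_i/b$ and $\nu(q_0)=0$ are needed for it), and your closing argument that $\mu_n=\rho_n$ over the domain $A$ forces freeness is a correct repackaging of the ``claim'' verified inside the paper's proof. The genuine gap is the middle step, which you yourself flag as unresolved: the construction of the admissible blowing up that lowers $\mu_{n_0}$. The mechanism you sketch --- blow up along an ideal built from $s$ and generators $g_1,\dots,g_k$ of $I$, ``dividing the $g_j$, hence everything lying in powers of $I$, by $s$'' --- makes no progress. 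Indeed, if $I=(g_1,\dots,g_k)$ and you blow up along $(s,g_1,\dots,g_k)$, write $\pi(z_i)=s^{n_0}z_i'$ and $\pi(h)=s^{n_0+1}h'$ with $z_i'\in I_{(1)}^{n_0}$ and $h'\in I_{(1)}^{n_0+1}$ (possible since $z_i\in I^{n_0}$, $h\in I^{n_0+1}$ and each $g_j=s\cdot(g_j/s)$). Your relation $sz_1+c_2z_2+\dots+c_{\mu_{n_0}}z_{\mu_{n_0}}=h$ becomes $s^{n_0}\bigl(sz_1'+\sum_{i\geq 2}c_iz_i'\bigr)=s^{n_0+1}h'$, and cancelling $s^{n_0}$ (this is where the nonzerodivisor property of $s$ gets used up) leaves $sz_1'+\sum_{i\geq 2}c_iz_i'\in I_{(1)}^{n_0+1}$: literally the same relation, with the same coefficient $s$ on $z_1'$, and $s$ is still a non-unit in $R^{(1)}$ whenever $\nu(s)>0$, which is the only case in which anything needs to be done. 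A uniform rescaling of the powers of $I$ cannot turn the redundant generator into a combination of the others. Note also that the cancellation you would ultimately need is one modulo $I_{(1)}^{n_0+1}$, and ``$s$ is a nonzerodivisor on $R^{(1)}$'' does not by itself give injectivity of multiplication by $s$ on $I_{(1)}^{n_0}/I_{(1)}^{n_0+1}$, since powers of the nilradical may have embedded primes.

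What the paper does at this point is asymmetric in exactly the way your rescaling is not: fix $n$, choose $y_1,\dots,y_r\in I^n$ whose images $y_i/b_i$ form a basis of $I_\PI^n/I_\PI^{n+1}$, extend by $y_{r+1},\dots,y_{r+s}$ to generators of $I^n/I^{n+1}$, and record the relations $b_ky_{r+k}-b_{1k}y_1-\dots-b_{rk}y_r\in I^{n+1}$ with $b_k\notin\PI$ (your relation, with the non-unit coefficient sitting on the redundant generator). Then one blows up along $(b_1,y_1,\dots,y_r)$: the lifts of the basis --- elements of $I^n$, not generators of $I$ --- are divided by $b_1$, while $y_{r+1}$ is left untouched, so the transformed relation exhibits $y^{(1)}_{r+1}$ as an $R^{(1)}$-combination of $y^{(1)}_1,\dots,y^{(1)}_r$ modulo $I_{(1)}^{n+1}$; Lemma \ref{Lemabgenofimodinq} guarantees that $y^{(1)}_1,\dots,y^{(1)}_{r+s}$ still generate $I_{(1)}^n/I_{(1)}^{n+1}$ and Lemma \ref{Lemathatsaysaboyutlineinde} that $y^{(1)}_1,\dots,y^{(1)}_r$ remain independent, so one redundant generator disappears and the process terminates after $s$ steps; the reduction to one $n$ at a time is what Proposition \ref{propabkeepreg} buys, since only the localized hypothesis has to be carried along. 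Without this choice of center (or an equivalent one) and the accompanying generation/independence lemmas, your descent on $\sum_n(\mu_n-\rho_n)$ --- including the unproved assertion that no other $\mu_n$ grows --- does not get off the ground; this is the missing core of the argument, not a routine bookkeeping detail.
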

In order to prove Proposition \ref{makmodfree}, we will need some preliminary results.
\begin{Lema}\label{Lemabgenofimodinq}
Take elements $y_1,\ldots,y_{r+s}\in I^n$ such that their images in $I^n/I^{n+1}$ generate $I^n/I^{n+1}$ as an $R_{\rm red}$-module. Consider the local blowing up $\pi:R\lra R^{(1)}$ along the ideal $(b,y_1,\ldots,y_r)$ for some $b\in R\setminus I$. Set
\[
y_i^{(1)}=\pi(y_i)/b\mbox{ for }1\leq i\leq r\mbox{ and }y_{r+k}^{(1)}=\pi(y_{r+k})\mbox{ for }1\leq k\leq s.
\]
Then the images of $y_1^{(1)},\ldots,y_{r+s}^{(1)}$ in $I_{(1)}^n/I_{(1)}^{n+1}$ form a set of generators of this module.
\end{Lema}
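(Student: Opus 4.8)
The plan is to imitate the proof of Lemma~\ref{presgenerunblup}, with powers of the nilradical playing the role that $\PI$ plays there; concretely I would establish the (slightly stronger, and cleaner) inclusion $I_{(1)}^n\subseteq\left(y_1^{(1)},\dots,y_{r+s}^{(1)}\right)R^{(1)}$, which at once gives that the images of the $y_i^{(1)}$ generate $I_{(1)}^n/I_{(1)}^{n+1}$. Since $I$ is the only associated prime of $R$, $I$ is prime and, by Remark~\ref{rmkonasspide}, $J(b)=(0)$; as $y_i\in I^n\subseteq I=\SU(\nu)$ we have $\nu(y_i)=\infty\geq\nu(b)$, so the blowing up is defined and $R^{(1)}=R'_{\mathfrak C_\nu(R')}$ with $R'=R[t_1,\dots,t_r]\subseteq R_b$, $t_i:=y_i/b$; moreover $y_i^{(1)}=t_i$ and $\pi(y_i)=b\,t_i$ for $i\leq r$, while $y_{r+k}^{(1)}=\pi(y_{r+k})$ for $k\leq s$. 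The key step is to compute $\rad(R')$: each $t_i$ is nilpotent (as $y_i\in\rad(R)$), and since $R$ is noetherian, $I$ is prime and $b\notin I$, an element of $R$ lying in $IR_b$ already lies in $I$; splitting an arbitrary $x\in R'$ as $x=x_0+\sum_i t_ix_i$ with $x_0\in R$ then gives
\[
\rad(R')=IR_b\cap R'=IR'+(t_1,\dots,t_r)R',
\]
and consequently $I_{(1)}^n=\rad(R')^nR^{(1)}$ with $\rad(R')^n=I^nR'+\sum_{j=1}^{n}I^{n-j}R'(t_1,\dots,t_r)^j\subseteq I^nR'+(t_1,\dots,t_r)R'$.

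With this in place I would carry out the substitution exactly as in Lemma~\ref{presgenerunblup}. Let $\alpha\in I_{(1)}^n$ and write $\alpha=\beta/q$ with $\beta\in\rad(R')^n$ and $\nu(q)=0$. By the last inclusion, $\beta=\beta'+\beta''$ with $\beta'\in I^nR'$ and $\beta''=\sum_{i=1}^r t_i\delta_i$, $\delta_i\in R'$; writing $\beta'=\sum_l c_l\rho_l$ with $c_l\in I^n$, $\rho_l\in R'$, and using Nakayama's Lemma (corollary of Theorem~2.2 of \cite{Mat}) to get $I^n=\sum_{j=1}^{r+s}Ry_j$, we may take $c_l=\sum_j a_{lj}y_j$ with $a_{lj}\in R$. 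Substituting and using $t_i=y_i^{(1)}$, $\pi(y_j)=b\,y_j^{(1)}$ for $j\leq r$, and $\pi(y_{r+k})=y_{r+k}^{(1)}$, each summand of $\beta/q$ becomes an $R^{(1)}$-multiple of one of the $y_i^{(1)}$; collecting coefficients yields $\alpha\in\left(y_1^{(1)},\dots,y_{r+s}^{(1)}\right)R^{(1)}$, which proves the inclusion and hence the lemma.

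The main obstacle is the identity $\rad(R')=IR'+(t_1,\dots,t_r)R'$: under a local blowing up the nilradical can strictly grow — $\rad\left(R^{(1)}\right)$ is in general larger than $IR^{(1)}$, since the $t_i$ are genuine new nilpotents — so one cannot simply extend $I$, and one has to keep track of the $t$-degree filtration on $\rad(R')^n$. This is exactly where noetherianity of $R$, the hypothesis $b\notin I$, and the primality of $I$ (the standing assumption that $I$ is the only associated prime of $R$) all enter; once the identity is available, the rest is the same coefficient bookkeeping as in the proof of Lemma~\ref{presgenerunblup}.
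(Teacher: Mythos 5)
Your proof is correct, and at its core it is the same substitution argument as the paper's: expand an element of $I_{(1)}^n$ in the generators $y_i/b$, place the ``constant term'' in $I^n$, rewrite it through $y_1,\ldots,y_{r+s}$, and use $\pi(y_i)=b\,y_i^{(1)}$ for $i\leq r$. The differences lie in where the work is placed. The paper keeps an error term: it writes $p_0-a_1y_1-\ldots-a_{r+s}y_{r+s}\in I^{n+1}$ and pushes its image into $I_{(1)}^{n+1}$, while the crucial claim $p_0\in I^n$ is only asserted ``as in Lemma \ref{presgenerunblup}'' --- but in that lemma the analogous step was a valuation-theoretic argument with $\nu_1$, which does not transfer verbatim to powers of the nilradical. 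Your identity $\rad(R')=IR'+(t_1,\ldots,t_r)R'$, the consequence $\rad(R')^n\subseteq I^nR'+(t_1,\ldots,t_r)R'$, and the observation that every element of $I_{(1)}^n$ is $\beta/q$ with $\beta\in\rad(R')^n$ and $q$ a unit of $R^{(1)}$, together supply exactly the justification the paper leaves implicit, and they correctly isolate where primality of $I$, $b\notin I$ and $J(b)=(0)$ (Remark \ref{rmkonasspide}) enter. Secondly, by invoking Nakayama to get $I^n=(y_1,\ldots,y_{r+s})R$ you prove the stronger ideal-theoretic inclusion $I_{(1)}^n\subseteq\left(y_1^{(1)},\ldots,y_{r+s}^{(1)}\right)R^{(1)}$, whereas the paper, avoiding Nakayama, only obtains generation modulo $I_{(1)}^{n+1}$ --- which is all the lemma asks for and is marginally more economical. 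Either route proves the statement; yours buys a cleaner conclusion and an explicit account of the one step the paper's sketch glosses over.
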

\begin{proof}
Take an element $p/q\in I_{(1)}^n$. As in proof of the Lemma \ref{presgenerunblup}, we can write
\[
p=p_0+\frac{y_1}{b}p_1+\ldots+\frac{y_r}{b}p_r\mbox{, for some }p_1,\ldots,p_r\in R'
\]
with $p_0\in I^n$. This means that there exists $a_1,\ldots,a_{r+s}\in R$ such that
$p_0-a_1y_1-\ldots-a_{r+s}y_{r+s}=y_0\in I^{n+1}$. Consequently,
\[
\frac pq-\sum_{i=1}^r\frac{\pi(ba_i)+p_i}{q}y_i^{(1)}-\sum_{i=r+1}^{r+s}\frac{\pi(a_{i})}{q}y^{(1)}_{i}=\frac{\pi(y_0)}{q}\in
I_{(1)}^{n+1}.
\]
This concludes our proof.
\end{proof}

\begin{Lema}\label{Lemathatsaysaboyutlineinde}
Under the same assumptions as in the previous lemma, if the images of $y_1,\ldots,y_r$ in $I^n/I^{n+1}$ are $R_{\rm red}$-linearly independent, then the images of $a^{(1)}_1,\ldots,a^{(1)}_r$ in $I_{(1)}^n/I_{(1)}^{n+1}$ are $\left(R^{(1)}\right)_{\rm red}$-linearly independent.
\end{Lema}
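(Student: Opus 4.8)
The plan is to begin with an arbitrary linear dependence relation for the images of $y_1^{(1)},\dots,y_r^{(1)}$ modulo $I_{(1)}^{n+1}$, clear denominators so that the coefficients lie in $R$, descend the relation to an honest membership in $I^{n+1}\subseteq R$, and then invoke the $R_{\rm red}$-linear independence of $y_1,\dots,y_r$ in $I^n/I^{n+1}$ together with the primality of $I$. Write $R^{(1)}=R'_{\mathfrak C_\nu(R')}$ with $R'=R[y_1/b,\dots,y_r/b]\subseteq R_b$; recall that $J(b)=(0)$ by Remark~\ref{rmkonasspide} (as $I$ is the only associated prime of $R$), so that $R\hookrightarrow R_b$, and that $\SU(\nu)=I$ since $I$ is the unique minimal prime of $R$. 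Suppose $\alpha_1,\dots,\alpha_r\in R^{(1)}$ satisfy $\alpha_1 y_1^{(1)}+\dots+\alpha_r y_r^{(1)}\in I_{(1)}^{n+1}$; the goal is to show $\alpha_i\in I_{(1)}$ for every $i$. Multiplying by a common denominator $q\in R'$ with $\nu(q)=0$ (a unit in $R^{(1)}$) reduces us to the case $\alpha_i\in R'$; writing then $\alpha_i=a_i/b^{k}$ with $a_i\in R$ and one common exponent $k$, and multiplying the relation by $b^{k+1}$ while using $b\,y_i^{(1)}=\pi(y_i)$, we arrive at
\[
\pi\bigl(a_1 y_1+\dots+a_r y_r\bigr)\in I_{(1)}^{n+1}.
\]

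The step I expect to be the main obstacle is descending this membership to $R$. Since the nilradical commutes with localization, $I_{(1)}=\rad(R')\,R^{(1)}$ and hence $I_{(1)}^{n+1}=\rad(R')^{n+1}R^{(1)}$; since moreover $R'\subseteq R_b$ and the nilradical of a subring is the contraction of the ambient nilradical, $\rad(R')\subseteq I R_b$, whence $\rad(R')^{n+1}\subseteq I^{n+1}R_b$. Therefore there is $s\in R'$ with $\nu(s)=0$ for which $s\cdot(a_1 y_1+\dots+a_r y_r)\in I^{n+1}R_b$, and clearing the remaining powers of $b$ (licit since $R\hookrightarrow R_b$) yields an element $g\in R\setminus I$ — a suitable power of $b$ times a preimage in $R$ of $s$, both of finite $\nu$-value and hence outside $\SU(\nu)=I$, which is prime — such that
\[
c_1 y_1+\dots+c_r y_r\in I^{n+1},\qquad c_i:=g\,a_i\in R .
\]
This is exactly the analogue, with $I^{n+1}$ in place of $\PI^2+I$, of the denominator clearing performed in the proof of Lemma~\ref{lemmathatkeepregul}; the only subtlety is keeping track of how $I_{(1)}^{n+1}$ sits above $I^{n+1}$.

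Finally, since $y_1,\dots,y_r\in I^n$ and their images in $I^n/I^{n+1}$ are $R_{\rm red}$-linearly independent, the relation $c_1 y_1+\dots+c_r y_r\in I^{n+1}$ forces $c_i\in I$ for every $i$; as $I$ is prime and $g\notin I$ we conclude $a_i\in I$. Then $a_i/b^{k}$ is nilpotent in $R_b$, hence in $R'$, hence in $R^{(1)}$, so $a_i/b^{k}\in I_{(1)}$ and therefore $\alpha_i=(a_i/b^{k})q^{-1}\in I_{(1)}$. This establishes the asserted $\left(R^{(1)}\right)_{\rm red}$-linear independence. Aside from the descent in the middle paragraph, everything is routine bookkeeping plus an appeal to Lemma~\ref{Lemabgenofimodinq} and to the standing hypotheses of the section.
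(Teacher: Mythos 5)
Your argument is correct and follows essentially the same route as the paper's own proof: clear denominators to reduce to a relation $a_1b^lc\,y_1+\cdots+a_rb^lc\,y_r\in I^{n+1}$ in $R$ with $b^lc\notin I$, then use the $R_{\rm red}$-linear independence of the $y_i$ together with the primality of $I$ to conclude $a_i\in I$ and hence $\alpha_i\in I_{(1)}$. The only difference is that you spell out the descent from $I_{(1)}^{n+1}$ to $I^{n+1}R_b$ (via $\rad(R')\subseteq IR_b$ and $J(b)=(0)$), which the paper asserts in a single line; your justification of that step is sound.
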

\begin{proof}

Take elements $\alpha_1,\ldots,\alpha_r\in R^{(1)}$ such that
\begin{equation}\label{deprelinradblup}
\alpha_1 y^{(1)}+\ldots+\alpha_r y^{(1)}\in I_{(1)}^{n+1}.
\end{equation}
We have to show that $\alpha_1,\ldots,\alpha_r\in I_{(1)}$. For each $i$, $1\leq i\leq r$, we write
$\alpha_i=(a_i/b^{r_i})/(c_i/b^{s_i})$ for some $a_i,c_i\in R$ and $r_i,s_i\in\N$. Then equation (\ref{deprelinradblup}) implies that there exists $l\in\N$ and $c\in R\setminus \PI$ such that
\[
a_1b^lcy_1+\ldots+a_rb^lcy_r\in I^{n+1}.
\]
Since $y_1+I^{n+1},\ldots, y_r+I^{n+1}$ are $R_{\rm red}$-linearly independent, this implies that
$$
a_ib^lc\in I\quad\text{ for every }i,1\leq i\leq r.
$$
Since $I$ is prime (this is a consequence of the fact that it is the only associated prime ideal of $R$) and $b,c\in R\setminus I$, we obtain that $a_1,\ldots,a_r\in I$. Consequently, $\alpha_1,\ldots,\alpha_r\in I_{(1)}$, which concludes our proof.
\end{proof}

\begin{proof}[Proof of Proposition \ref{makmodfree}]
By assumption, we have that $I_\PI^n/I_\PI^{n+1}$ is $\left(R_\PI\right)_{\rm red}$-free for every $n\in\N$. Hence, by Proposition
\ref{propabkeepreg} for every $\nu_1$-compatible local blowing up $R\lra R^{(1)}$ we have that $I_{\PI^{(1)}}^n/I_{\PI^{(1)}}^{n+1}$ is $\left(R^{(1)}_{\PI^{(1)}}\right)_{\rm red}$-free for every $n\in\N$. Therefore, it is enough to show that for a fixed $n\in\N$, there exists a local blowing up $R\lra R^{(1)}$ along an ideal $(b,a_1,\ldots,a_r)$ with $b\notin\PI$ and $a_1,\ldots,a_r\in I$ such that
$I_{(1)}^n/I_{(1)}^{n+1}$ is $\left(R^{(1)}\right)_{\rm red}$-free.

Take elements $y_1/b_1,\ldots, y_r/b_r\in I_\PI^n$, $y_1,\ldots,y_r\in R$ and $b_1,\ldots,b_r\in R\setminus \PI$ such that
\[
y_1/b_1+I_\PI^{n+1},\ldots,y_r/b_r+I_\PI^{n+1}
\]
form a basis of $I_\PI^n/I_\PI^{n+1}$. We observe first that since $I$ is prime and $y_i/b_i\in I_\PI^n$, we have $y_i\in I^n$ for each $i$, $1\leq i\leq r$. We claim that if
\[
y_1+I^{n+1},\ldots,y_r+I^{n+1}
\]
generate $I^n/I^{n+1}$ as an $R_{\rm red}$-module, then this module is free. Indeed, if there exists $a_i+I\in R_{\rm red}$ such that $a_1y_1+\ldots+a_ry_r\in I^{n+1}$, then
\[
a_1b_1/1\cdot y_1/b_1+\ldots+a_rb_r/1\cdot y_r/b_r=(a_1y_1+\ldots+a_ry_r)/1\in I_\PI^{n+1}.
\]
This implies that for each $i$, $1\leq i\leq r$, $a_ib_i/1\in I_\PI$ and consequently $a_ib_ic_i\in I$ for some 
$c_i\in R\setminus\PI$ . Since $I$ is prime and $b_1c_1,\ldots,b_rc_r\in R\setminus I$, we conclude that $a_1,\ldots,a_r\in I$, which is what we wanted to prove.

If $y_1+I^{n+1},\ldots,y_r+I^{n+1}$ do not generate $I^n/I^{n+1}$ (as an $R_{\rm red}$-module), then we take $y_{r+1},\ldots,y_{r+s}\in I^n$ such that $y_1+I^{n+1},\ldots,y_{r+s}+I^{n+1}$ generate $I^n/I^{n+1}$. For each $k$, $1\leq k\leq s$, since $y_{r+k}\in I^n$ there exist $b_k\in R\setminus \PI$, such that
\begin{equation}\label{eqthatrelatesgen}
b_ky_{r+k}-b_{1k}y_1-\ldots-b_{rk}y_r\in I^{n+1},
\end{equation}
for some $b_{1k},\ldots,b_{rk}\in R$. Consider now the local blowing up along the ideal $(b_1,y_1,\ldots,y_r)$. Set
$$
y^{(1)}_i:=\pi(y_i)/b_1\in R^{(1)}\quad\text{ for each }i,1\leq i\leq r
$$
and
$$
y^{(1)}_{r+k}:=\pi(y_{r+k})\in R^{(1)}\quad\text{ for each }k,1\leq k\leq s.
$$
From equation (\ref{eqthatrelatesgen}) we obtain that
\[
y^{(1)}_{r+1}-\pi(b_{11})y_1^{(1)}-\ldots-\pi(b_{r1})y_r^{(1)}\in I_{(1)}^{n+1},
\]
and
\[
\pi(b_k)y^{(1)}_{r+k}-\pi(b_1b_{1k})y_1^{(1)}-\ldots-\pi(b_rb_{rk})y_r^{(1)}\in I_{(1)}^{n+1},
\]
for every $k$, $2\leq k\leq s$. Consequently, $y^{(1)}_{r+1}+I_{(1)}^{n+1}$ is generated in the $\left(R^{(1)}\right)_{\rm red}$-module $I_{(1)}^n/I_{(1)}^{n+1}$ by $y^{(1)}_1+I_{(1)}^{n+1},\ldots, y_r^{(1)}+I_{(1)}^{n+1}$. Moreover, using Lemma \ref{Lemabgenofimodinq}, we obtain that $I_{(1)}^n/I_{(1)}^{n+1}$ is generated as an $R^{(1)}_{\rm red}$-module by the images of
$$
y_1^{(1)},\ldots,y^{(1)}_r,y^{(1)}_{r+2},\ldots,y^{(1)}_{r+s}.
$$
Also, by Lemma \ref{Lemathatsaysaboyutlineinde}, the images of $y_1^{(1)},\ldots,y_r^{(1)}$ in $I_{(1)}^n/I_{(1)}^{n+1}$ are
$\left(R^{(1)}\right)_{\rm red}$-linearly independent.

We proceed inductively to obtain a local blowing up $R\lra R^{(s)}$ such that the $\left(R^{(s)}\right)_{\rm red}$-module
$I^n_{(s)}/I_{(s)}^{n+1}$ is generated by the images of $y_1^{(s)},\ldots,y^{(s)}_r$ and the images of $y_1^{(s)},\ldots,y_r^{(s)}$ in
$I_{(s)}^n/I_{(s)}^{n+1}$ are $\left(R^{(s)}\right)_{\rm red}$-linearly independent.
\end{proof}

\section{Proof of the main Theorem}

In this section we present the proof of our main theorem.

\begin{proof}[Proof of Theorem \ref{mainthm}]
We will prove the assertion by induction on the rank. Since all rank one valuations admit local uniformization by assumption, we fix
$n\in\N$ and will prove that if all valuations of rank smaller than $n$ admit local uniformization, then also valuations of rank $n$ admit local uniformization.

Let $\nu$ be a valuation centered in the local ring $R\in Ob(\mathcal M)$ such that $\rk(\nu)=n$. By Lemma \ref{asspriide}, there exists a local blowing up $R\lra R^{(1)}$ with respect to $\nu$ such that $\rad\left(R^{(1)}\right)$ is the only associated prime ideal of $R^{(1)}$. Hence, replacing $R$ by $R^{(1)}$, we may assume that the only associated prime ideal of $R$ is $\rad(R)$.

Decompose $\nu$ as $\nu=\nu_1\circ\nu_2$ for valuations $\nu_1$ and $\nu_2$ with rank smaller than $n$. By assumption, we know that
$\nu_1$ and $\nu_2$ admit local uniformization. Since $\nu_1$ admits local uniformization, by use of Lemma \ref{lifblupfrloc}, there exists a local blowing up $R\lra R^{(1)}$ with respect to $\nu$ such that $R^{(1)}_{\PI^{(1)}}$ is regular and
$I_{\PI^{(1)}}^n/I^{n+1}_{\PI^{(1)}}$ is $\left(R^{(1)}_{\PI^{(1)}}\right)_{\rm red}$-free for every $n\in \N$. Replacing $R$ by
$R^{(1)}$ we may assume that $(R_\PI)_{\rm red}$ is regular and $I^n_\PI/I^{n+1}_\PI$ is $(R_\PI)_{\rm red}$-free for every $n\in \N$.

Since $\nu_2$ admits local uniformization, we can use Lemma \ref{lifblupfrquo} to obtain that there exists a local blowing up $R\lra R^{(1)}$ with respect to $\nu$ such that $\left(R^{(1)}_{\PI^{(1)}}\right)_{\rm red}$ and $R^{(1)}/\PI^{(1)}$ are regular and $I^n_{\PI^{(1)}}/I^{n+1}_{\PI^{(1)}}$ is $\left(R^{(1)}_{\PI^{(1)}}\right)_{\rm red}$-free for every $n\in \N$. Replacing $R$ by $R^{(1)}$ we can assume that $(R_\PI)_{\rm red}$ and $R/\PI$ are regular and that $I^n_\PI/I^{n+1}_\PI$ is $(R_\PI)_{\rm red}$-free for every $n\in \N$.

Since $\left(R_\PI\right)_{\rm red}$ and $R/\PI$ are regular, we apply Proposition \ref{propthatmakesreg} to obtain a $\nu_1$-compatible local blowing up $R\lra R^{(1)}$ such that $\left(R^{(1)}\right)_{\rm red}$ is regular. Using Proposition \ref{propabkeepreg}, we have that $I_{\PI^{(1)}}^n/I_{\PI^{(1)}}^{n+1}$ is a free $\left(R^{(1)}_{\PI^{(1)}}\right)_{\rm red}$-module for every $n\in\N$. By Proposition \ref{makmodfree}, there exists a $\PI^{(1)}$-compatible local blowing up $R^{(1)}\lra R^{(2)}$ such that $I_{(2)}^n/I_{(2)}^{n+1}$ is an $\left(R^{(2)}\right)_{\rm red}$-free module for every $n\in \N$. Moreover, since this local blowing up is along an ideal $(b,a_1,\ldots,a_r)$ with $b\notin\PI^{(1)}$ and $a_1,\ldots,a_r\in I_{(1)}$, we conclude using Proposition \ref{propthatmakesreg} that $\left(R^{(2)}\right)_{\rm red}$ is regular. This concludes our proof.
\end{proof}

\end{document}